\newtheorem{theorem}{Theorem}[section]
\newtheorem{lemma}[theorem]{Lemma}
\newtheorem{proposition}[theorem]{Proposition}
\theoremstyle{definition}
\newtheorem{example}[theorem]{Example}
\theoremstyle{remark}
\numberwithin{equation}{section}
\begin{document}

\title[Modulation preserving operators on LCA Groups ]{Modulation preserving operators  on
  Locally Compact Abelian Groups}

\author[M. Mortazavizadeh, R. Raisi Tousi ]{M. Mortazavizadeh, R. Raisi Tousi$^*$\\
January 13, 2019}

\subjclass[2010]{Primary  47A15 ; Secondary  42B99, 22B99.}

\keywords{locally compact abelian group, modulation invariant space,
range function, translation preserving operator, modulation preserving operator, range operator.}

\begin{abstract}
Let $G$ be a locally compact abelian group and $\Lambda$ be a closed subgroup of the dual group $\widehat{G}$. In this paper we investigate modulation preserving operators with respect to $\Lambda$, and give a characterization of them in terms of range operators.
\end{abstract} \maketitle

\section{Introduction and Preliminaries}
\noindent

 For a locally compact abelian (LCA) group  $G$, a translation invariant space is defined to be a closed subspace of $L^2(G)$  that is invariant under translations by elements of a closed subgroup  $\Gamma$ of $G$. Translation invariant spaces have been extensively grown in the theory and applications \cite{BHP, B, BR, Cab, KRr, KRs}.
Bownik in \cite{B} gave a characterization of shift preserving operators on $L^2(\Bbb{R}^d)$ in terms of range operators. In the setting of LCA groups, a bounded linear operator on $L^2(G)$ is said to be shift preserving if it commutes with translations by elements of a closed subgroup $\Gamma$ of $G$ which is discrete and cocompact. These operators have been studied in \cite{KRsh}. In \cite{MRK}, we investigated translation preserving operators, that is operators commuting with translations by elements of a closed subgroup of $G$ which is not necessarily discrete or cocompact. Another spaces, which are effective tools in Gabor theory, are spaces invariant under modulations. In \cite{MR}, using a range function approach, we studied modulation invariant spaces. We define a closed subspace $W$ of  $L^2(G)$  to be modulation invariant, if it is invariant under modulations by elements of a closed subgroup $\Lambda$ of $\widehat{G}$ which is not necessarily discrete or cocompact. In \cite{MR},  we proved that there is a one to one correspondence between modulation invariant spaces and range functions. Our goal in this paper is to investigate modulation preserving operators. We define a modulation preserving operator as a bounded linear operator on $L^2(G)$ which commutes with the modulation operator. We give a characterization of modulation preserving operators in terms of range operators. We first show that there is a one to one correspondence between modulation preserving operators on $L^2(G)$ and multiplication preserving operators on a vector valued space. We  then use this correspondence to get the characterization of modulation preserving operators in terms of range operators. Furthermore, we show that a modulation preserving operator has several properties in common with the associated range operator, especially compactness of one implies compactness of the other. We obtain a necessary condition for a modulation preserving operator to be  Hilbert Schmidt or of finite  trace. We have organized the article as follows. The rest of this section is devoted to stating some required preliminaries on translation invariant spaces and translation preserving operators which were studied in \cite{BR} and \cite{MRK}. Section 2 contains the main results of the paper.  Using a transformation of $L^2(G)$ into a vector valued space, we find a correspondence between modulation preserving operators on $L^2(G)$ and multiplication preserving operators on the vector valued space, which yields the desired characterization. Finally, we find relations between some properties of modulation preserving operators and the corresponding range operators. For a modulation preserving operator $U$, we show that if $U$ is Hilbert Schmidt (of finite trace), then so is the range operator associated to $U$.
 
Let $(\Omega, m)$ be a $\sigma$- finite measure space and $\mathcal{H}$ be a separable Hilbert space. A range funtion is a mapping $J: \Omega \longrightarrow \lbrace \textrm{  closed subspaces  of   $\mathcal{H}$ } \rbrace$. We write $P_{J}(\omega)$ for the orthogonal projections of $\mathcal{H}$ onto $J(\omega)$. A range function $J$ is measurable if the mapping $\omega \mapsto \langle P_{J}(\omega)(a) , b \rangle$ is measurable for all $a ,b \in \mathcal{H} $. Consider the space $L^{2}(\Omega, \mathcal{H})$ of all measurable functions $\phi$ from $\Omega$ to $\mathcal{H}$ such that $\Vert \phi \Vert _{2}^{2} = \int_{\Omega} \Vert \phi(\omega)\Vert^{2}_{\mathcal {H}} dm(\omega) <\infty$ with the inner product $\langle \phi , \psi \rangle = \int_{\Omega} \langle \phi(\omega) , \psi(\omega) \rangle_{\mathcal{H}} d m(\omega)$. A subset $\mathcal{D}$ of $L^{\infty}(\Omega) $ is said to be a determinig set for $L^{1}(\Omega) $, if for all $ f\in L^{1}(\Omega)$, $\int_{\Omega} fg dm =0 $ for all $ g \in \mathcal{D}$ implies that $f=0$. A closed subspace $\mathcal{W}$ of $L^{2}(\Omega, \mathcal{H})$ is called multiplicatively invariant with respect to a determining set $\mathcal{D}$, if for each $\phi \in \mathcal{W}$ and $g \in \mathcal{D}$ one has $g\phi \in \mathcal{W}$.
 Bownik and Ross in \cite[Theorem 2.4]{BR} showed that there is a correspondence between  multiplicatively invariant spaces and measurable range functions as follows.
\begin{proposition} \label{p1.1}
Suppose that $L^{2}(\Omega)$ is separable, so that $L^{2}(\Omega, \mathcal{H})$ is also separable. Then for a closed subapace $\mathcal{W}$ of $L^{2}(\Omega, \mathcal{H})$ and a determining set $\mathcal{D}$  for $L^{1}(\Omega) $ the following are equivalent.\\
(1)  $\mathcal{W}$ is multiplicatively invariant with respect to $\mathcal{D}$.\\
(2)  $\mathcal{W}$ is multiplicatively invariant with respect to $L^{\infty}(\Omega)$.\\
(3) There exists a measureble range function $J$ such that 
\begin{equation*}
\mathcal{W}=\lbrace \phi \in L^{2}(\Omega , \mathcal{H}) : \phi(\omega) \in J(\omega) \ \text{,} \ \  \text{ a.e. }  \omega \in \Omega \rbrace .
\end{equation*}
Identifying range functions which are equivalent a.e., the correspondence between $\mathcal{D}$- multiplicatively invariant spaces and measurable range functions is one to one and onto. Moreover, there is a countable subset $\mathcal{A}$ of  $L^{2}(\Omega, \mathcal{H})$ such that $\mathcal{W}$ is the smallest closed $\mathcal{D}$- multiplicatively invariant space containing $\mathcal{A}$. For any such $\mathcal{A}$ the measurable range function associated to $\mathcal{W}$ satisfies
\begin{equation*}
J(\omega) = \overline{span} \lbrace \phi (\omega) : \phi \in \mathcal{A}\rbrace \ \  a.e. \  \omega \in \Omega .  \label{J} 
\end{equation*}
\end{proposition}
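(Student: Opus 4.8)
Since $\mathcal{D}\subseteq L^{\infty}(\Omega)$, the implication $(2)\Rightarrow(1)$ is trivial; the plan is to prove $(1)\Rightarrow(2)$ and then $(2)\Rightarrow(3)$ together with the injectivity and surjectivity of the stated correspondence. \emph{For $(1)\Rightarrow(2)$}, fix $\phi\in\mathcal{W}$ and $g\in L^{\infty}(\Omega)$. Since $g$ is bounded, $g\phi\in L^{2}(\Omega,\mathcal{H})$, so it suffices to show $g\phi\perp\psi$ for every $\psi\in\mathcal{W}^{\perp}$. For such $\psi$ the scalar function $\omega\mapsto\langle\phi(\omega),\psi(\omega)\rangle_{\mathcal{H}}$ lies in $L^{1}(\Omega)$ by Cauchy--Schwarz, and for each $d\in\mathcal{D}$ we have $d\phi\in\mathcal{W}$, hence $\int_{\Omega}d(\omega)\langle\phi(\omega),\psi(\omega)\rangle\,dm(\omega)=\langle d\phi,\psi\rangle=0$. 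As $\mathcal{D}$ is a determining set for $L^{1}(\Omega)$, this forces $\langle\phi(\omega),\psi(\omega)\rangle=0$ a.e., and therefore $\langle g\phi,\psi\rangle=\int_{\Omega}g(\omega)\langle\phi(\omega),\psi(\omega)\rangle\,dm(\omega)=0$. This computation also yields two facts to be reused: $\langle\eta(\omega),\psi(\omega)\rangle=0$ a.e.\ for all $\eta\in\mathcal{W}$, $\psi\in\mathcal{W}^{\perp}$; and, $\mathcal{W}$ now being $L^{\infty}$-multiplicatively invariant, $\langle g\psi,\eta\rangle=\langle\psi,\bar g\eta\rangle=0$, so $\mathcal{W}^{\perp}$ is $L^{\infty}$-multiplicatively invariant as well.

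For $(2)\Rightarrow(3)$, use separability of $L^{2}(\Omega,\mathcal{H})$ to pick a countable dense set $\mathcal{A}=\{\phi_{n}\}$ in $\mathcal{W}$; since $\mathcal{W}$ is closed and $L^{\infty}$-multiplicatively invariant and contains $\mathcal{A}$, while any closed $\mathcal{D}$-multiplicatively invariant space containing $\mathcal{A}$ contains $\overline{\mathcal{A}}=\mathcal{W}$, it is the smallest such. Set $J(\omega)=\overline{span}\{\phi_{n}(\omega):n\in\mathbb{N}\}$; a pointwise measurable Gram--Schmidt process applied to $(\phi_{n}(\omega))_{n}$ produces measurable vector fields with closed spans $J(\omega)$, from which one reads off that $\omega\mapsto\langle P_{J}(\omega)a,b\rangle$ is measurable, so $J$ is a measurable range function and the displayed span formula holds by construction. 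Put $\mathcal{M}=\{\phi\in L^{2}(\Omega,\mathcal{H}):\phi(\omega)\in J(\omega)\text{ a.e.}\}$. Then $\mathcal{M}$ is $L^{\infty}$-multiplicatively invariant and closed (if $\psi_{k}\to\psi$ in $L^{2}(\Omega,\mathcal{H})$ with $\psi_{k}(\omega)\in J(\omega)$ a.e., pass to an a.e.\ convergent subsequence and use closedness of $J(\omega)$), and it contains $\mathcal{A}$, so $\mathcal{W}\subseteq\mathcal{M}$. For the converse, apply the same construction to $\mathcal{W}^{\perp}$, obtaining a measurable range function $J'$ with associated space $\mathcal{M}'\supseteq\mathcal{W}^{\perp}$; the fiberwise orthogonality above gives $J(\omega)\perp J'(\omega)$ a.e., and if $J(\omega)\oplus J'(\omega)\neq\mathcal{H}$ on a set of positive measure, a measurable selection of a unit vector orthogonal to both over that set (truncated to finite measure, using $\sigma$-finiteness) would lie in $\mathcal{W}^{\perp}\subseteq\mathcal{M}'$ yet be orthogonal to $J'(\omega)$ --- impossible. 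Hence $J(\omega)\oplus J'(\omega)=\mathcal{H}$ a.e. Now for $\phi\in\mathcal{M}$ write $\phi=\phi_{1}+\phi_{2}$ with $\phi_{1}\in\mathcal{W}\subseteq\mathcal{M}$ and $\phi_{2}\in\mathcal{W}^{\perp}\subseteq\mathcal{M}'$; then $\phi_{2}(\omega)=\phi(\omega)-\phi_{1}(\omega)\in J(\omega)$ and also $\phi_{2}(\omega)\in J'(\omega)$, so $\phi_{2}=0$ and $\phi\in\mathcal{W}$, giving $\mathcal{M}=\mathcal{W}$. The same identity $\mathcal{W}=\mathcal{M}$ makes the correspondence one to one and onto: if $J_{1},J_{2}$ induce the same space, a measurable unit vector field in $J_{1}(\omega)\ominus(J_{1}(\omega)\cap J_{2}(\omega))$ over a positive-measure set (truncated to finite measure) would give an element of $\mathcal{W}$ violating $\phi(\omega)\in J_{2}(\omega)$ a.e.; and for an arbitrary measurable $J$, the space $\mathcal{M}_{J}$ is closed and $L^{\infty}$-multiplicatively invariant, while covering $J$ by countably many measurable unit vector fields and multiplying by characteristic functions of finite-measure sets shows that the range function of $\mathcal{M}_{J}$ is $J$.

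The technical heart is the measurable-selection machinery: a pointwise measurable Gram--Schmidt procedure to ensure that the fiber spaces $J(\omega)$, defined as closed pointwise spans, form a measurable range function, and measurable selections of unit vector fields to manufacture the ``separating'' elements of $L^{2}(\Omega,\mathcal{H})$ used in the reverse inclusion and in the injectivity/surjectivity statements. Everything else is the bookkeeping above; I expect the measurability of $J$ and the a.e.\ identity $J(\omega)\oplus J'(\omega)=\mathcal{H}$ to need the most care.
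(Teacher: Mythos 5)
The paper does not actually prove this proposition --- it is quoted from Bownik--Ross \cite[Theorem 2.4]{BR} --- and your argument is essentially the proof given there (and in Helson's and Bownik's earlier work): $(1)\Rightarrow(2)$ via the determining-set computation showing $\langle\phi(\omega),\psi(\omega)\rangle=0$ a.e.\ for $\phi\in\mathcal{W}$, $\psi\in\mathcal{W}^{\perp}$, and $(2)\Rightarrow(3)$ by taking $J(\omega)$ to be the fiberwise closed span of a countable dense subset and playing $\mathcal{W}$ off against the range function of $\mathcal{W}^{\perp}$. The outline is correct; the only points a full write-up would need to flesh out are the measurable-selection details you already flag, the measurability of $\omega\mapsto P_{J_{1}(\omega)\cap J_{2}(\omega)}$ implicitly used in your injectivity argument (e.g.\ via alternating projections), and the remark that your $\mathcal{M}=\mathcal{W}$ argument applies verbatim to an arbitrary, not necessarily dense, generating set $\mathcal{A}$, which is what the final displayed formula for $J(\omega)$ requires.
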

Let $\mathcal{W}$ be a multiplicatively invariant space with respect to a determining set $\mathcal{D}$ and $J$ be the corresponding  range function. In \cite{MRK} we defined a range operator on $J$ as a mapping $R$ from $\Omega$  to the set af all bounded linear operators on closed subspaces of $\mathcal{H}$. We also defined a $\mathcal{D}$- multiplication preserving operator on $\mathcal{W}$ as an operator $U: \mathcal{W} \longrightarrow  L^2(\Omega ,  \mathcal{H})$ such that for all $g \in \mathcal{D}$ and $\phi \in \mathcal{W}$ 
\begin{equation} \label{r.op}
U(g \phi)(\omega) = g(\omega) U(\phi)(\omega), \ \ \omega \in \Omega.
\end{equation}
The following proposition presents a characterization of multiplication preserving operators in terms of range operators for which we sketch a proof. For more details see \cite[Theorem 2.2]{MRK}.
\begin{proposition} \label{t3.3}
Suppose that $\mathcal{W} \subseteq L^2(\Omega ,  \mathcal{H})$ is a multiplicatively invariant space with respect to a determining set $\mathcal{D}$. Assume that $U: \mathcal{W} \longrightarrow  L^2(\Omega ,  \mathcal{H})$ is a bounded linear operator. Then the following are equivalent.\\
(1)  $U$ is  multiplication preserving  with respect to $\mathcal{D}$.\\
(2)  $U$ is  multiplication preserving  with respect to $L^{\infty}(\Omega) $.\\
(3) There exists a measurable range operator $R$ on $J$ such that for all $\phi \in \mathcal{W}$,
\begin{equation*}
U\phi(\omega) = R(\omega)(\phi(\omega)) \ \ a.e. \ \omega \in \Omega .
\end{equation*} 
Conversely, given a measurable range operator $R$ on $J$ with $ess \sup \Vert R(\omega) \Vert < \infty$,  there is a bounded multiplication preserving operator $ U : \mathcal{W} \longrightarrow L^2(\Omega ,  \mathcal{H})$, such that \eqref{r.op} holds. The correspondence between $U$ and $R$ is one to one under the usual convention that the range operators are identified if they are equal a.e. Moreover $\Vert U \Vert = ess  \sup \Vert R(\omega) \Vert $.
\end{proposition}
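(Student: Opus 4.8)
The plan is to prove $(1)\Leftrightarrow(2)$ together with the cycle $(2)\Rightarrow(3)\Rightarrow(1)$, and then the converse statement. Only $(2)\Rightarrow(1)$ is immediate, since $\mathcal{D}\subseteq L^\infty(\Omega)$. The crux is $(1)\Rightarrow(2)$; once this is available, the passage to a range operator follows the Bownik--Ross pattern, using the countable generating set $\mathcal{A}=\{\phi_n\}$ with $J(\omega)=\overline{\operatorname{span}}\{\phi_n(\omega)\}$ a.e.\ supplied by Proposition~\ref{p1.1} (which also gives that $\mathcal{W}$ is $L^\infty(\Omega)$-multiplicatively invariant, so the products $g\phi$ appearing below stay in $\mathcal{W}$).

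For $(1)\Rightarrow(2)$, fix $\phi\in\mathcal{W}$ and $\psi\in L^2(\Omega,\mathcal{H})$, and put $F(\omega)=\langle U\phi(\omega),\psi(\omega)\rangle_{\mathcal{H}}$, which lies in $L^1(\Omega)$ by Cauchy--Schwarz. Since $\mathcal{D}$ is a determining set for $L^1(\Omega)$, no nonzero element of $L^1(\Omega)$ annihilates all of $\mathcal{D}$, so $\operatorname{span}\mathcal{D}$ is weak-$*$ dense in $L^\infty(\Omega)=L^1(\Omega)^{*}$ by the bipolar theorem (recall $(\Omega,m)$ is $\sigma$-finite). Fix $g\in L^\infty(\Omega)$ and a net $g_\alpha\in\operatorname{span}\mathcal{D}$ with $g_\alpha\to g$ weak-$*$. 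Then $\int_\Omega g_\alpha F\,dm\to\int_\Omega gF\,dm$; moreover $g_\alpha\phi\to g\phi$ weakly in $L^2(\Omega,\mathcal{H})$, since pairing with any $\eta$ reduces to pairing $g_\alpha$ with the $L^1$ function $\omega\mapsto\langle\phi(\omega),\eta(\omega)\rangle$, and hence $U(g_\alpha\phi)\to U(g\phi)$ weakly and $\langle U(g_\alpha\phi),\psi\rangle\to\langle U(g\phi),\psi\rangle$. Since \eqref{r.op} gives $\langle U(g_\alpha\phi),\psi\rangle=\int_\Omega g_\alpha F\,dm$ for each $\alpha$, the limits agree: $\langle U(g\phi),\psi\rangle=\int_\Omega gF\,dm=\langle gU\phi,\psi\rangle$. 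As $\psi$ is arbitrary, $U(g\phi)=gU\phi$ for all $g\in L^\infty(\Omega)$.

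For $(2)\Rightarrow(3)$, note first that for $\phi\in\mathcal{W}$ the vector $U\phi(\omega)$ vanishes a.e.\ on $\{\omega:\phi(\omega)=0\}$ (apply $(2)$ to $\chi_{\{\phi=0\}}\phi=0$) and, crucially, $\|U\phi(\omega)\|_{\mathcal{H}}\le\|U\|\,\|\phi(\omega)\|_{\mathcal{H}}$ a.e.: otherwise there are $\varepsilon>0$ and a set $E$ of positive finite measure on which $\phi(\omega)\neq0$ and $\|U\phi(\omega)\|>(\|U\|+\varepsilon)\|\phi(\omega)\|$, and then $\chi_E\phi\in\mathcal{W}$ gives $\|U(\chi_E\phi)\|_2^2=\int_E\|U\phi\|^2\,dm>(\|U\|+\varepsilon)^2\|\chi_E\phi\|_2^2$, contradicting boundedness of $U$. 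There are only countably many finite combinations $\sum_i c_i\phi_{n_i}$ with coefficients in $\mathbb{Q}+i\mathbb{Q}$, so off a single null set every such combination $\phi$ satisfies both $U\phi(\omega)=\sum_i c_iU\phi_{n_i}(\omega)$ (by linearity of $U$) and the norm estimate; therefore $\phi_n(\omega)\mapsto U\phi_n(\omega)$ is well defined and extends to a bounded operator $R(\omega)$ of norm $\le\|U\|$ on $J(\omega)$. Measurability of $R$ follows from measurability of $\omega\mapsto\langle U\phi_n(\omega),\phi_m(\omega)\rangle$ and density of $\{\phi_n(\omega)\}$ in $J(\omega)$. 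Finally $U\phi(\omega)=R(\omega)\phi(\omega)$ holds for $\phi\in\mathcal{A}$, for $g\phi_n$ with $g\in L^\infty(\Omega)$ (by $(2)$ and linearity of $R(\omega)$), and for finite sums of these; this set is dense in $\mathcal{W}$, and both $\phi\mapsto U\phi$ and $\phi\mapsto R(\cdot)\phi(\cdot)$ are bounded into $L^2(\Omega,\mathcal{H})$, so the identity holds on all of $\mathcal{W}$. The implication $(3)\Rightarrow(1)$ is then a one-line computation using the linearity of each $R(\omega)$.

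For the converse, given a measurable range operator $R$ on $J$ with $c:=\operatorname{ess\,sup}_\omega\|R(\omega)\|<\infty$, define $U\phi(\omega)=R(\omega)\phi(\omega)$ for $\phi\in\mathcal{W}$; measurability of $U\phi$ follows from that of $R$ by approximating $\phi$ by the generators, $\|U\phi\|_2\le c\|\phi\|_2$ makes $U$ bounded with $\|U\|\le c$, and \eqref{r.op} is immediate. For $\|U\|\ge c$, fix $\varepsilon>0$; the set $\{\omega:\|R(\omega)\|>c-\varepsilon\}$ has positive measure, and after shrinking it to finite measure and applying a measurable selection we obtain a measurable $\omega\mapsto v_\omega\in J(\omega)$ with $\|v_\omega\|=1$ and $\|R(\omega)v_\omega\|>c-\varepsilon$ on a positive-measure set $E'$; testing $U$ on the function $\omega\mapsto\chi_{E'}(\omega)v_\omega\in\mathcal{W}$ yields $\|U\|\ge c-\varepsilon$. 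Uniqueness is immediate: if $R_1,R_2$ induce the same $U$, then $R_1(\omega)\phi_n(\omega)=R_2(\omega)\phi_n(\omega)$ a.e.\ for every $n$, whence $R_1(\omega)=R_2(\omega)$ on $J(\omega)$ a.e. I expect the main obstacle to be $(1)\Rightarrow(2)$ — making the weak-$*$ density of $\operatorname{span}\mathcal{D}$ interact correctly with $U$ — with the measurable selection in the norm equality the other point requiring care; everything else is routine bookkeeping with the generating set.
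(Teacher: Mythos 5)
Your proof is correct, and on the pivotal step $(1)\Rightarrow(2)$ it takes a genuinely more elementary route than the paper. Both arguments ultimately rest on the same fact --- that the determining-set property makes $\operatorname{span}\mathcal{D}$ weak-$*$ dense in $L^{\infty}(\Omega)=L^{1}(\Omega)^{*}$ --- but the paper packages this in von Neumann algebra language: it embeds $\mathcal{D}$ and $L^{\infty}(\Omega)$ as multiplication operators, argues $\mathcal{D}'\subseteq L^{\infty}(\Omega)'$ by contradiction using a normal functional $\omega_{\xi,\eta}$ and the weak-$*$ continuity of $f\mapsto\omega_{\xi,\eta}(xM_f-M_fx)$, and concludes via the commutant of $\mathcal{R}(U)$. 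You instead run the density argument directly: pair $U(g_\alpha\phi)$ against a test vector, identify the pairing with $\int_\Omega g_\alpha F\,dm$ for the $L^1$ function $F(\omega)=\langle U\phi(\omega),\psi(\omega)\rangle$, and pass to the weak-$*$ limit, using that $U$ is weak-to-weak continuous. This buys transparency and avoids the predual machinery of \cite{M}, at no loss of generality. For $(2)\Rightarrow(3)$ the paper gets the pointwise bound $\Vert U\phi(\omega)\Vert\le\Vert U\Vert\,\Vert\phi(\omega)\Vert$ by letting $g\in L^{\infty}(\Omega)$ range over all symbols in an integral identity, whereas you get it by a $\chi_E$-contradiction; these are interchangeable, and you are in fact more careful than the paper about the a.e.\ well-definedness of $R(\omega)$ on rational spans, its measurability, and the density argument extending $U\phi(\omega)=R(\omega)\phi(\omega)$ to all of $\mathcal{W}$. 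For the norm equality in the converse, note you could avoid the measurable selection entirely by reusing the pointwise bound $\Vert R(\omega)\phi_n(\omega)\Vert\le\Vert U\Vert\,\Vert\phi_n(\omega)\Vert$ on the countable family of rational combinations (which is how the paper gets $\operatorname{ess\,sup}\Vert R(\omega)\Vert\le\Vert U\Vert$); your selection argument works, since the selection can be realized by a countable exhaustion over rational combinations of the generators, but it is heavier than necessary.
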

\begin{proof}
The implications $(3)\Rightarrow (2)$ and $(2)\Rightarrow(1)$ are obvious. Suppose that (1) holds. For $g \in L^{\infty}(\Omega)$ let $M_g$ be the multiplication operator $ M_g : L^2(\Omega, \mathcal{H}) \longrightarrow L^2(\Omega, \mathcal{H})$, $ M_{g} \phi (\omega) = g(\omega) \phi(\omega)$. Using the embedding $g\mapsto M_g$, we can consider $\mathcal{D}$ and $ L^{\infty}(\Omega)$ as subsets of $\mathcal{B}(L^2(\Omega, \mathcal{H}))$, the set of all bounded linear operators on $L^2(\Omega, \mathcal{H})$. In the context of von Neumann algebras since (1) holds, we have $\mathcal{D} \subseteq \mathcal{R}(U)^{'}$, where $\mathcal{R}(U)$ is the von Neumann algebra generated by $U$. We show that  $L^{\infty}(\Omega) \subseteq \mathcal{R}(U)^{'}$. It is enough to show that $\mathcal{D}^{'} \subseteq L^{\infty}(\Omega) ^{'}$. Suppose by contradiction that $ x \in \mathcal{D}^{'} \setminus L^{\infty}(\Omega) ^{'} $. Let $y_0 \in L^{\infty}(\Omega) $ be such that $xy_0 - y_{0}x \neq 0$. Now by \cite[Theorem 4.2.3]{M}, there exists a weak operator continuous operator $\omega_{\xi , \eta} \in L^1 (L^2(\Omega, \mathcal{H}))$  such that 
 \begin{equation} \label{T}
\omega_{\xi , \eta}(xy_0 - y_{0}x) \neq 0,
\end{equation}
where $\xi , \eta \in \mathcal{H}$ and  for $T \in \mathcal{B}(L^2(\Omega, \mathcal{H}))$, 
 \begin{equation*} 
\omega_{\xi , \eta}(T) = \langle T \xi , \eta \rangle.
\end{equation*}

 We show that if $\lbrace f_{\alpha} \rbrace$ is a net in $L^{\infty}(\Omega)$ converges to $f$ in the weak-* topology, then $M_{f_{\alpha}} \rightarrow M_{f}$ in the weak operator topology. Using the fact that $L^2(\Omega, \mathcal{H}) \cong L^2 (\Omega) \otimes \mathcal{H}$,  we have for $\varphi_0 , \varphi _1 \in L^2 (\Omega) $ and $\xi _ 0 , \xi _1 \in \mathcal{H}$ 
\begin{align*}
\langle M_{f_{\alpha}} (\varphi_0 \otimes \xi_0 ) , \varphi_1 \otimes \xi_1 \rangle &= \langle f_{\alpha}(\varphi_0 \otimes \xi_0 ), \varphi_1 \otimes \xi_1 \rangle \cr
&= \langle f_{\alpha}\varphi_0 , \varphi_1 \rangle \langle \xi_0 , \xi_1  \rangle \cr
&= \langle \xi_0 , \xi_1  \rangle \int_{\Omega} f_{\alpha}(\omega) \varphi_0 (\omega) \varphi_1 (\omega)d\omega 
\end{align*}
Now since $\varphi_0 \varphi_1 \in L^1 (\Omega)$, so 
\begin{equation*}
\int_{\Omega} f_{\alpha}(\omega) \varphi_0 (\omega) \varphi_1 (\omega)d\omega  \rightarrow \int_{\Omega} f(\omega) \varphi_0 (\omega) \varphi_1 (\omega)d\omega ,
\end{equation*}
thus
\begin{equation*}
\langle M_{f_{\alpha}} (\varphi_0 \otimes \xi_0 ) , \varphi_1 \otimes \xi_1 \rangle \rightarrow \langle M_{f} (\varphi_0 \otimes \xi_0 ) , \varphi_1 \otimes \xi_1 \rangle .
\end{equation*}
Now define $ F : L^{\infty}(\Omega) \longrightarrow \mathbb{C}$, given by
\begin{equation*}
F(f) = \omega_{\xi , \eta}( x M_f - M_f x).
\end{equation*}
By weak operator continuity of $\omega_{\xi , \eta}$ it follows that $F$ is weak-* continuous and thus $F \in L^1 (\Omega)$. Since $x \in \mathcal{D}^{'}$, we have $F\mid_{\mathcal{D}} =0 $ and hence $F=0$ which is a contradiction to \eqref{T} and proves (2). 

Now let (2) hold. Assume that $\mathcal{A}$ is a countable subset of $L^2(\Omega ,  \mathcal{H})$ which generates $W$. By Proposition \ref{p1.1} 
\begin{equation*}
 J(\omega) = \overline{span} \lbrace \phi (\omega) : \phi \in \mathcal{A}\rbrace \ \  a.e. \  \omega \in \Omega .
\end{equation*}
We define the operator $S(\omega) $ on the set $\lbrace \phi (\omega) :  \phi \in \mathcal{A}\rbrace$  by
\begin{equation} \label{r.o}
S(\omega) (\phi (\omega)) = U \phi (\omega).
\end{equation}
Then $S(\omega)$ is clearly linear. Also  for any $g \in L^{\infty}(\Omega)$ we have
\begin{align*}
\int_{\Omega} \vert g(\omega) \vert ^2 \Vert U \phi(\omega) \Vert ^2 d\omega &= \Vert g U \phi  \Vert_2 ^ 2 \\
&= \Vert  U g\phi  \Vert_2 ^ 2 \\
& \leq \Vert U \Vert ^2 \Vert  g\phi  \Vert _2 ^ 2 \\
&=  \Vert U \Vert ^2 \int_{\Omega} \vert g(\omega) \vert ^2 \Vert  \phi(\omega) \Vert ^2 d\omega. 
\end{align*}
Since $g \in L^{\infty}(\Omega)$ is arbitrary, this implies that 
\begin{equation} \label{ufi}
\Vert U \phi(\omega) \Vert \leq \Vert U \Vert \Vert \phi(\omega) \Vert, a.e.\  \omega \in \Omega,
\end{equation}
which shows that $S(\omega)$ is bounded and hence it is extended to a bounded linear operator $R(\omega)$ on $  \overline{span} \lbrace \phi (\omega) : \phi \in \mathcal{A} \rbrace = J(\omega) $, as is desired in (3). For the moreover part, \eqref{ufi} clearly implies that $ess\sup_{\omega \in \Omega} \Vert R(\omega) \Vert \leq \Vert U \Vert$. Also we have
\begin{align*}
\Vert U \phi \Vert _2 ^2  &= \int _{\Omega} \Vert U \phi (\omega) \Vert  ^2 d\omega \\
&=  \int _{\Omega} \Vert R(\omega) \phi (\omega) \Vert  ^2 d\omega \\
& \leq \int _{\Omega} \Vert R(\omega) \Vert ^2 \Vert \phi (\omega) \Vert  ^2 d\omega \\
& \leq ess\sup_{\omega \in \Omega} \Vert R(\omega) \Vert ^2   \int _{\Omega}  \Vert \phi (\omega) \Vert  ^2 d\omega \\
&= ess\sup_{\omega \in \Omega} \Vert R(\omega) \Vert ^2 \Vert \phi \Vert _2^2 .
\end{align*}
Thus  $\Vert U \Vert = ess\sup_{\omega \in \Omega} \Vert R(\omega) \Vert $. Finally, by \eqref{r.op} the correspondence between $U$ and $R$ is one to one and onto.
\end{proof}
Assume that $G$ is a second countable LCA group, $\Gamma$ is a closed subgroup of $G$, and the annihilator of $\Gamma$ in $\widehat{G}$ is defined as $\Gamma ^* = \lbrace \chi \in \widehat{G} : \chi(\gamma)=1, \ \gamma \in \Gamma \rbrace$. Suppose also that $\Omega$ is a measurable section for the quotient $\widehat{G} / \Gamma ^{*}$ and $C$ is a measurable section for the quotient $G / \Gamma$. For $\gamma \in \Gamma$ we denote by $X_{\gamma}$ the corresponding character on $\widehat{G}$, i.e. $X_{\gamma}(\chi )= \chi(\gamma)$ for  $\chi \in \widehat{G}$. One can see that the set $ \mathcal{D}= \lbrace X_{\gamma} \vert_{\Omega} : \gamma \in \Gamma \rbrace $ is a determining set for $L^{1}(\Omega) $. A closed subspace $ V \subseteq L^{2}(G)$ is called $\Gamma$- translation invariant space, if $T_{\gamma} V \subseteq V $ for all $\gamma \in \Gamma$. We say that $V$ is generated by a countable subset $\mathcal{A}$ of  $L^{2}(G)$, when $V=S^{\Gamma}(\mathcal{A})=\overline{span}\lbrace T_{\gamma}f : f \in \mathcal{A} , \gamma \in \Gamma \rbrace$.
In \cite[Proposition 6.4]{BHP} it is shown  that there exists an isometric isomorphism, namely Zak tranasform $Z: L^{2} (G) \longrightarrow L^{2}(\Omega , L^{2}(C))$  satisfying 
 \begin{equation} \label{Zak}
 Z(T_{\gamma}\phi )= X_{\gamma} \vert_{\Omega} Z (\phi).
 \end{equation}
  A bounded linear operator $U$ on $L^2(G)$ is said to be translation preserving with respect to a closed subgroup $\Gamma$ of $G$, if $UT_{\gamma} = T_{\gamma}U$, where $T_{\gamma}$ is the translation oparator. Let $U$ be a translation preserving operator on a translation invariant space $V$. We define  an induced operator $U'$ on the multiplicatively invariant space $Z(V)$ as
\begin{equation} \label{io}
U': Z(V) \longrightarrow  L^2(\Omega ,  L^2(C))  , \ U'(Zf)= Z(Uf),
\end{equation}
where $Z$ be as in \eqref{Zak}.
Note that it is easy to see that 
\begin{equation} \label{zmp}
  U'(X_{\gamma}  Zf)(\omega) = X_{\gamma}(\omega)U'(Zf)(\omega), 
 \end {equation}
 which implies that $U'$ is a multiplication preserving operator(see also \cite{MRK}).
Let $V$ be a  translation invariant space with the range function $J$. A range operator on $J$ is a mapping $R$ from the Borel section $\Omega$ of $\widehat{G}/ \Gamma ^* $ to the set of all bounded linear operators on closed subspaces of $L^{2}(C)$, where $C$ is a Borel section of $G/ \Gamma$, so that the domain of $R(\omega)$ equals $J(\omega)$ for a.e. $\omega \in \Omega$. A range operator $R$ is called measurable, if the mapping $\omega \mapsto \langle R(\omega)P_{J}(\omega)(a) , b \rangle$ is measurable for all $a,b \in L^2(C)$, where $P_{J}(\omega)$ is the orthogonal projection of $L^{2}(C)$ onto $J(\omega)$.  Now we obtain the following proposition which characterizes  translation preserving operators in terms of range operators, see also \cite[Theorem 2.5]{MRK}.
\begin{proposition} \label{t3.6}
Let $ V \subseteq L^{2}(G)$ be a $\Gamma$- translation invatiant subspace with the range function $J$ and $U: V \longrightarrow L^2(G)$ be a bounded linear operator. Then the following are equivalent. \\
(1) The operator $U$ is translation preserving with respect to $\Gamma$. \\
(2) The induced operator $U'$ is multiplication preserving operator with respect to $L^{\infty}(\Omega) $.\\
(3) There exists a measurable range operator $R$ on $J$ such that for all $\phi \in V$ 
\begin{equation*}
Z U\phi(\omega) = R(\omega)(Z\phi(\omega)) \ \ a.e. \ \omega \in \Omega .
\end{equation*} 
The correspondence between $U$ and $R$ is one to one under the usual convention that the range operators are identified if they are equal a.e.. Moreover $\Vert U \Vert = ess  \sup \Vert R(\omega) \Vert $.
\end{proposition}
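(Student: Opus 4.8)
The plan is to transport the whole situation through the Zak transform $Z$ of \eqref{Zak} and reduce the statement to Proposition~\ref{t3.3} applied to the multiplicatively invariant space $\mathcal{W}=Z(V)\subseteq L^2(\Omega,L^2(C))$, with $\mathcal{H}=L^2(C)$ and $\mathcal{D}=\{X_{\gamma}\vert_{\Omega}:\gamma\in\Gamma\}$. The bridge is the fact, established in \cite{BR}, that $Z(V)$ is precisely the $\mathcal{D}$-multiplicatively invariant space whose associated measurable range function in the sense of Proposition~\ref{p1.1} is the range function $J$ of $V$; this is what lets us identify the two a priori different notions of ``range function'' (and hence of ``measurable range operator on $J$'') occurring in Proposition~\ref{t3.3} and in the statement to be proved.

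First I would treat $(1)\Leftrightarrow(2)$. The induced operator $U'$ of \eqref{io} is just $U'=ZUZ^{-1}$ restricted to $Z(V)$, so it is well defined and bounded with $\Vert U'\Vert=\Vert U\Vert$ because $Z$ is an isometric isomorphism of $L^2(G)$ onto $L^2(\Omega,L^2(C))$ carrying $V$ onto $Z(V)$. Using $Z T_{\gamma}=M_{X_{\gamma}\vert_{\Omega}}Z$ from \eqref{Zak}, for every $\gamma\in\Gamma$ and $\phi\in V$ one has $UT_{\gamma}\phi=T_{\gamma}U\phi$ if and only if $ZUT_{\gamma}\phi=ZT_{\gamma}U\phi$, i.e. if and only if $U'(X_{\gamma}\vert_{\Omega}\,Z\phi)=X_{\gamma}\vert_{\Omega}\,U'(Z\phi)$. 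Thus $U$ is $\Gamma$-translation preserving precisely when $U'$ is multiplication preserving with respect to $\mathcal{D}$ (the computation \eqref{zmp} already records one direction). Since $\mathcal{D}$ is a determining set for $L^1(\Omega)$ and $Z(V)$ is $\mathcal{D}$-multiplicatively invariant, the equivalence $(1)\Leftrightarrow(2)$ of Proposition~\ref{t3.3} upgrades this to: $U'$ is multiplication preserving with respect to $L^{\infty}(\Omega)$, which is exactly assertion $(2)$.

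Next, $(2)\Leftrightarrow(3)$ is a direct translation of the equivalence $(2)\Leftrightarrow(3)$ of Proposition~\ref{t3.3} for $\mathcal{W}=Z(V)$, using the identification of range functions above. That proposition yields a measurable range operator $R$ on $J$ with $U'\psi(\omega)=R(\omega)(\psi(\omega))$ a.e. for all $\psi\in Z(V)$; substituting $\psi=Z\phi$ gives $ZU\phi(\omega)=U'(Z\phi)(\omega)=R(\omega)(Z\phi(\omega))$ a.e., which is $(3)$, and conversely any such $R$ forces $U'$ to be $L^{\infty}(\Omega)$-multiplication preserving. The one-to-one correspondence between $U$ and $R$ and the norm identity $\Vert U\Vert=\operatorname{ess\,sup}_{\omega}\Vert R(\omega)\Vert$ are then inherited from the corresponding assertions of Proposition~\ref{t3.3}, together with $\Vert U'\Vert=\Vert U\Vert$ and the injectivity of the assignment $U\mapsto U'$ (which is clear since $Z$ is injective).

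The step I expect to be the main obstacle is not any estimate but the bookkeeping around range functions: one must be sure that the range function attached to $V$ as a $\Gamma$-translation invariant subspace of $L^2(G)$ is literally the same object as the range function attached to $Z(V)$ as a $\mathcal{D}$-multiplicatively invariant subspace of $L^2(\Omega,L^2(C))$, so that ``measurable range operator on $J$'' is unambiguous across the two propositions. This is where I would lean on the structure theory of \cite{BR} and \cite{BHP} rather than reprove anything. The remaining points---well-definedness and boundedness of $U'$, and the transfer of the measurability of $R$ through $Z$---are routine.
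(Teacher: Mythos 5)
Your proposal is correct and follows essentially the same route as the paper: both reduce the statement to Proposition~\ref{t3.3} applied to $Z(V)$ via the intertwining relation \eqref{Zak}, and both obtain the norm identity from $\Vert U\Vert=\Vert U'\Vert$. The only cosmetic difference is that you close the equivalence by proving $(1)\Leftrightarrow(2)$ in both directions, whereas the paper closes the cycle with an explicit $(3)\Rightarrow(1)$ computation using $R(\omega)(X_{\gamma}(\omega)Z\phi(\omega))=X_{\gamma}(\omega)R(\omega)(Z\phi(\omega))$ and the injectivity of $Z$ --- the same underlying mechanism.
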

\begin{proof}
By the fact that the induced operator $U'$ is multiplication preserving, the implication $(1) \Rightarrow (2)$ is obvious. Using Proposition \ref{t3.3}, one can easily show that (2) implies (3). Assume that (3) holds. For $\gamma \in \Gamma$ and $\phi \in V$, we have
\begin{align*}
Z(U T_{\gamma} \phi)(\omega) &= R(\omega)(Z(T_{\gamma} \phi)(\omega)) \cr
&= R(\omega)(X_{\gamma}(\omega) Z\phi(\omega)) \cr
&= X_{\gamma}(\omega) R(\omega)(Z\phi(\omega)) \cr
&= X_{\gamma}(\omega) Z U(\phi)(\omega) \cr
&= Z( T_{\gamma} U \phi)(\omega).
\end {align*}
Now (1) follows from the fact that $Z$ is one to one. Proposition \ref{t3.3} implies also that the correspondence between $R$ and $U$ is unique. The moreover part follows from Proposition \ref{t3.3} and the fact that $\Vert U \Vert = \Vert U' \Vert$.
\end{proof}


\section{Modulation preserving operators}

Let $\Lambda$ be a closed subgroup of $\widehat{G}$. Assume that  $\Lambda^{*}$ is the annihilator of $\Lambda$ in $G$, i.e. $\Lambda ^* = \lbrace x \in G : \lambda(x)=1, \ \lambda \in \Lambda \rbrace$. In addition, suppose that $\Pi$ is a measurable section for the quotient $G / \Lambda ^{*}$ and $D$ is a measurable section for the quotient $\widehat{G} / \Lambda$. For $\lambda \in \Lambda$ we denote by $X_{\lambda}$ the corresponding character on $G$. One can see that the set $ \mathcal{D}= \lbrace X_{\lambda} \vert_{\Pi} : \lambda \in \Lambda \rbrace $ is a determining set for $L^{1}(\Pi) $. A closed subspace $ W \subseteq L^{2}(G)$ is called $\Lambda$- modulation invariant space, if $M_{\lambda} W \subseteq W $ for all $\lambda \in \Lambda$, where $M_{\lambda}$ is the modulation operator defined as $M_{\lambda} : L^{2} (G) \longrightarrow L^{2} (G)$, $M_{\lambda}f(x)=\lambda(x)f(x)$.
 Let $\mathcal{F}$ denote the Fourier transform and $Z$ be the Zak transform. We define an isometric isomorphism as
 \begin{equation} \label{ztild}
  \tilde{Z} : L^2(G) \longrightarrow L^{2}(\Pi , L^{2}(D))  , \  \tilde{Z}:= Z \ o\ \mathcal{F} .
 \end{equation}
 In the following proposition, we obtain a characterization of frame and Riesz sequence property for modulation invariant systems in terms of the Zak transform. In the case of translation invariant spaces, this formolation was done in \cite[Theorem 6.6]{BHP}. Using the fact that unitary operators preserve frames and Riesz bases (see \cite[Section 5.3]{Ole}), the proof of the following proposition is a direct consequence of the case of translation, so we omit the  proof. Also, we gave a complete proof in \cite[Theorem 2.3]{MR}.
\begin{proposition} \label{fr}
Let $\mathcal{A} \subseteq L^2(G)$ be a countable subset and $J$ be the measurable range function associated to $W= M^{\Lambda}(\mathcal{A})$. Assume that $E^{\Lambda}(\mathcal{A}) := \lbrace M_{\lambda} \phi \ : \ \phi \in \mathcal{A} \rbrace$. The following conditions are equivalent.\\
(1) $E^{\Lambda}(\mathcal{A})$ is a  continuous  frame (continuous Riesz basis) for $W$ with bounds $ 0 < A \leq B < \infty $. \\
(2) The set $\lbrace \tilde{Z} \phi (x) \ : \ \phi \in \mathcal{A} \rbrace$  is a frame (Riesz basis) with bounds $A$ and $B$, for almost every $x \in \Pi$.
\end{proposition}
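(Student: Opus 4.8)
The plan is to transport the statement to the translation-invariant setting via the Fourier transform, where the analogous result is already available. First I would record that $\mathcal{F}$ intertwines modulations with translations: for $\lambda\in\Lambda$, regarded now as a closed subgroup of $\widehat{G}$ acting on $L^2(\widehat{G})$ by translation, one has $\mathcal{F}M_{\lambda}=T_{\lambda}\mathcal{F}$ (for a suitable normalization of $\mathcal{F}$; the precise form is immaterial since $\Lambda$ is a group). Hence $\mathcal{F}(W)$ is a $\Lambda$-translation invariant subspace of $L^2(\widehat{G})$, in fact $\mathcal{F}(W)=S^{\Lambda}(\mathcal{F}\mathcal{A})$, and
\begin{equation*}
\mathcal{F}\big(E^{\Lambda}(\mathcal{A})\big)=\lbrace\, T_{\lambda}(\mathcal{F}\phi) \ : \ \phi\in\mathcal{A},\ \lambda\in\Lambda \,\rbrace .
\end{equation*}
Moreover, since by definition the range function attached to the modulation invariant space $W$ is the range function of the multiplicatively invariant space $\tilde{Z}(W)=Z(\mathcal{F}(W))\subseteq L^{2}(\Pi,L^{2}(D))$, Proposition \ref{p1.1} shows that $J$ coincides with the range function of $\mathcal{F}(W)$ viewed as a $\Lambda$-translation invariant space.

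Next I would invoke that $\mathcal{F}$ is a unitary isomorphism of $L^2(G)$ onto $L^2(\widehat{G})$, together with the fact that unitary operators preserve continuous frames and continuous Riesz bases along with their bounds (see \cite[Section 5.3]{Ole}). This gives that condition (1) holds if and only if $\lbrace\, T_{\lambda}(\mathcal{F}\phi)\ :\ \phi\in\mathcal{A},\ \lambda\in\Lambda \,\rbrace$ is a continuous frame (respectively, continuous Riesz basis) for $\mathcal{F}(W)$ with bounds $A$ and $B$.

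Finally I would apply the translation-invariant version of the theorem, \cite[Theorem 6.6]{BHP}, on $L^2(\widehat{G})$ with the closed subgroup $\Lambda\leq\widehat{G}$. Under Pontryagin duality $\widehat{\widehat{G}}\cong G$ the annihilator of $\Lambda$ is $\Lambda^{*}$, so $\Pi$ is a measurable section of $\widehat{\widehat{G}}/\Lambda^{*}$, $D$ is a measurable section of $\widehat{G}/\Lambda$, and the associated Zak transform is precisely $Z:L^{2}(\widehat{G})\longrightarrow L^{2}(\Pi,L^{2}(D))$, with $\tilde{Z}=Z\circ\mathcal{F}$ as in \eqref{ztild}. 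Thus \cite[Theorem 6.6]{BHP} asserts that $\lbrace\, T_{\lambda}(\mathcal{F}\phi)\,\rbrace$ is a continuous frame (respectively, continuous Riesz basis) for $\mathcal{F}(W)$ with bounds $A$ and $B$ if and only if $\lbrace\, Z(\mathcal{F}\phi)(x)\ :\ \phi\in\mathcal{A}\,\rbrace=\lbrace\, \tilde{Z}\phi(x)\ :\ \phi\in\mathcal{A}\,\rbrace$ is a frame (respectively, Riesz basis) with bounds $A$ and $B$ for almost every $x\in\Pi$, which is exactly condition (2). Chaining the two equivalences proves the proposition.

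The only delicate point is the bookkeeping rather than any new idea: one must check that the Haar measure normalizations on $G$, $\widehat{G}$, $\Lambda$, $\Pi$ and $D$ are chosen compatibly so that the frame bounds are \emph{literally} preserved under $\mathcal{F}$ and under $Z$ (this is the content of the isometry assertions \eqref{Zak} and \eqref{ztild}), and that the translation-to-modulation dictionary (replacing $\Gamma$ by $\Lambda$, $\Gamma^{*}$ by $\Lambda^{*}$, the section $\Omega$ by $\Pi$, and the section $C$ by $D$) genuinely converts the hypotheses of \cite[Theorem 6.6]{BHP} into the ones in force here. Once this is verified, the proof is a direct transcription, which is why it is reasonable to omit the details and refer to \cite[Theorem 2.3]{MR}.
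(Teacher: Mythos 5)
Your proposal is correct and follows exactly the route the paper itself indicates: conjugate by the Fourier transform to convert modulations into translations (using that unitaries preserve continuous frames and Riesz bases with their bounds), and then apply the translation-invariant version \cite[Theorem 6.6]{BHP} with $\Gamma$, $\Gamma^*$, $\Omega$, $C$ replaced by $\Lambda$, $\Lambda^*$, $\Pi$, $D$. The bookkeeping you flag (compatibility of the sections and the identity $\tilde{Z}=Z\circ\mathcal{F}$) is precisely the content of the paper's omitted verification.
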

The next lemma states that every $\Lambda$- modulation invariant space can be decomposed to mutually orthogonal $\Lambda$- modulation invariant spaces each of which is generated by a single function in $L^2(G)$. The proof is similar to \cite[Theorem 5.3]{BR} and so  is omitted.
\begin{lemma} \label{t2.3}
Let $W$ be a $\Lambda$- modulation invariant subspace of $L^{2}(G)$. Then there exist functions $\phi_{n} \in W$, $n \in \mathbb{N}$ such that\\
(1) The set $\{ M_{\lambda} \phi _{n} \ : \ \lambda \in \Lambda  \}$ is a Paseval frame for $M^{\Lambda}(\phi _{n})$. \\
(2) The space $W$ can be decomposed as an orthogonal sum 
\begin{equation*}
W= \bigoplus _{n \in \mathbb{N}} M^{\Lambda}(\phi _{n}).
\end{equation*}
\end{lemma}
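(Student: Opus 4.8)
The plan is to move the whole problem to the vector-valued model $L^{2}(\Pi , L^{2}(D))$ via the isometric isomorphism $\tilde{Z}$ of \eqref{ztild}. Since the Fourier transform turns modulation into translation and $Z$ turns translation into multiplication by \eqref{Zak}, we have $\tilde{Z}(M_{\lambda}\phi) = X_{\lambda}\vert_{\Pi}\,\tilde{Z}\phi$ for $\lambda \in \Lambda$, so $\mathcal{W} := \tilde{Z}(W)$ is a closed subspace of $L^{2}(\Pi , L^{2}(D))$ that is multiplicatively invariant with respect to $\mathcal{D} = \lbrace X_{\lambda}\vert_{\Pi} : \lambda \in \Lambda \rbrace$, hence by Proposition \ref{p1.1} also with respect to $L^{\infty}(\Pi)$; let $J$ be the associated measurable range function, so $\mathcal{W} = \lbrace \phi \in L^{2}(\Pi , L^{2}(D)) : \phi(x) \in J(x)\ \text{a.e.}\rbrace$. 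Applying Proposition \ref{fr} with the singleton generating set $\mathcal{A} = \lbrace \phi \rbrace$, the family $\lbrace M_{\lambda}\phi : \lambda \in \Lambda \rbrace$ is a Parseval frame for $M^{\Lambda}(\phi)$ exactly when the one-element set $\lbrace \tilde{Z}\phi(x)\rbrace$ is a Parseval frame for almost every $x$, and a one-element set is a Parseval frame for its span precisely when the vector has norm $0$ or $1$; so condition (1) for a function $\phi$ amounts to $\Vert \tilde{Z}\phi(x)\Vert_{L^{2}(D)} \in \lbrace 0,1\rbrace$ a.e. Therefore it suffices to produce $\lbrace \phi_{n}\rbrace_{n\in\mathbb{N}} \subseteq W$ with $\Vert\tilde{Z}\phi_{n}(x)\Vert \in \lbrace 0,1\rbrace$ a.e.\ and such that the nonzero vectors among $\lbrace \tilde{Z}\phi_{n}(x) : n\in\mathbb{N}\rbrace$ form an orthonormal basis of $J(x)$ for a.e. $x$: once such $\phi_{n}$ are in hand, (1) follows from Proposition \ref{fr}, and (2) follows because a.e.\ orthogonality of the (at most one-dimensional) fibres $\overline{span}\lbrace \tilde{Z}\phi_{n}(x)\rbrace$ gives orthogonality of the subspaces $M^{\Lambda}(\phi_{n})$, while the fact that these fibres span $J(x)$ a.e.\ gives, through the bijection of Proposition \ref{p1.1}, that the closed sum of the $M^{\Lambda}(\phi_{n})$ has range function $J$ and hence equals $W$.

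To construct the $\phi_{n}$, first use the last part of Proposition \ref{p1.1} to fix a countable set $\lbrace \eta_{m}\rbrace_{m\in\mathbb{N}} \subseteq \mathcal{W}$ with $J(x) = \overline{span}\lbrace \eta_{m}(x) : m\rbrace$ a.e., and then run a measurable Gram--Schmidt procedure in the variable $x$: put $v_{1}(x) = \eta_{1}(x)$ and, inductively, $v_{m}(x) = \eta_{m}(x) - \sum_{j<m}\langle \eta_{m}(x),\psi_{j}(x)\rangle\,\psi_{j}(x)$, with $\psi_{m}(x) = \Vert v_{m}(x)\Vert^{-1}v_{m}(x)$ on $\lbrace v_{m}\neq 0\rbrace$ and $\psi_{m}(x) = 0$ otherwise. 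Each $\psi_{m}$ is measurable because the scalar functions $x\mapsto\langle\eta_{m}(x),\psi_{j}(x)\rangle$ and the set $\lbrace v_{m}=0\rbrace$ are measurable; an easy induction shows that the nonzero members of $\lbrace\psi_{j}(x)\rbrace_{j\le m}$ form an orthonormal set spanning the same subspace as $\lbrace\eta_{j}(x)\rbrace_{j\le m}$, so $\psi_{m}(x)\in J(x)$, $\Vert\psi_{m}(x)\Vert\in\lbrace 0,1\rbrace$, and $\overline{span}\lbrace\psi_{m}(x):m\rbrace = J(x)$ a.e. Since $(\Pi ,m)$ is $\sigma$-finite, write $\Pi = \bigcup_{k}A_{k}$ with the $A_{k}$ pairwise disjoint, measurable, of finite measure, and let $\chi_{A_{k}}$ denote the indicator of $A_{k}$; then $\psi_{n}\chi_{A_{k}} \in L^{2}(\Pi , L^{2}(D))$ because $\Vert\psi_{n}(x)\chi_{A_{k}}(x)\Vert\le 1$, and $\psi_{n}(x)\chi_{A_{k}}(x)\in J(x)$ a.e., so $\psi_{n}\chi_{A_{k}}\in\mathcal{W}$ by Proposition \ref{p1.1}(3). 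Relabel the nonzero functions among $\lbrace\tilde{Z}^{-1}(\psi_{n}\chi_{A_{k}})\rbrace_{n,k\in\mathbb{N}}$ as $\lbrace\phi_{n}\rbrace_{n\in\mathbb{N}}\subseteq W$.

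It remains to check (1) and (2) for this family. Each $\tilde{Z}\phi_{n}$ equals one of the functions $\psi_{p}\chi_{A_{q}}$, whose values have norm $0$ or $1$ a.e., so Proposition \ref{fr} (singleton case) gives that $\lbrace M_{\lambda}\phi_{n} : \lambda \in \Lambda\rbrace$ is a Parseval frame for $M^{\Lambda}(\phi_{n})$, which is (1). For (2), $\tilde{Z}(M^{\Lambda}(\phi_{n}))$ is the smallest closed $L^{\infty}(\Pi)$-multiplicatively invariant space containing $\tilde{Z}\phi_{n}$, hence by Proposition \ref{p1.1} has range function $x\mapsto\overline{span}\lbrace\tilde{Z}\phi_{n}(x)\rbrace$; for $n\neq n'$ these fibres are orthogonal a.e.\ (either the two $A$-sets are disjoint, so one fibre is $\lbrace 0\rbrace$, or the two $\psi$-indices differ and the vectors are orthonormal), whence the $M^{\Lambda}(\phi_{n})$ are mutually orthogonal. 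Moreover $\overline{span}\bigcup_{n}\tilde{Z}(M^{\Lambda}(\phi_{n}))$ is the smallest closed $L^{\infty}(\Pi)$-multiplicatively invariant space containing all $\tilde{Z}\phi_{n}$, so by Proposition \ref{p1.1} its range function is $x\mapsto\overline{span}\lbrace\tilde{Z}\phi_{n}(x):n\rbrace$, which for a.e. $x$ (lying in exactly one $A_{k}$) equals $\overline{span}\lbrace\psi_{m}(x):m\rbrace = J(x)$, the range function of $\mathcal{W}$; by the injectivity in Proposition \ref{p1.1} this forces $\overline{span}\bigcup_{n}M^{\Lambda}(\phi_{n}) = W$. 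Combined with mutual orthogonality, $W = \bigoplus_{n\in\mathbb{N}}M^{\Lambda}(\phi_{n})$.

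The one genuinely non-routine ingredient is the measurable Gram--Schmidt step, i.e.\ producing a measurable ``field'' of orthonormal bases for the range function $J$; this is exactly where the argument runs parallel to \cite[Theorem 5.3]{BR}. The only extra bookkeeping, absent in the finite-measure case, is the $\sigma$-finite exhaustion of $\Pi$, which is needed to keep the generators $\phi_{n}$ square-integrable.
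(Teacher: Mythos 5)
Your proof is correct and is essentially the argument the paper has in mind: the paper omits the proof entirely, deferring to \cite[Theorem 5.3]{BR}, and your route --- transferring to the multiplicatively invariant picture via $\tilde{Z}=Z\circ\mathcal{F}$, running a measurable Gram--Schmidt orthonormalization of the range function $J$, cutting by a $\sigma$-finite exhaustion of $\Pi$ to keep the generators square-integrable, and pulling back --- is exactly that argument adapted from translations to modulations. The reduction of the Parseval condition to $\Vert\tilde{Z}\phi_n(x)\Vert\in\{0,1\}$ a.e.\ via Proposition \ref{fr} and the identification of orthogonality and completeness at the level of fibres via Proposition \ref{p1.1} are both handled correctly.
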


Note that $ \tilde{Z}$  turns $\Lambda$- modulation invariant  spaces in $L^{2} (G)$ into multiplicatively invariant spaces in $L^{2}(\Pi , L^{2}(D))$  and vice versa. Here we establish a characterization of $\Lambda$- modulation invariant  spaces in terms of range functions as follows. The proof is similar to the translation case (\cite[Theorem 6.5]{BHP}) and so is omitted (see also \cite{MR}).
\begin{proposition} \label{p1.2}
Let $W \subseteq L^{2}(G)$ be a closed subapace and $\tilde{Z}$ be as in \eqref{ztild}. Then the following are equivalent.\\
(1)  $W$ is  $\Lambda$- modulation invariant. \\
(2)  $\tilde{Z}(W)$ is a multiplicavely invariant subspace of $ L^{2}(\Pi , L^{2}(D))$ with respect to the determining set $ \mathcal{D}= \lbrace X_{\lambda} \vert_{\Pi} : \lambda \in \Lambda \rbrace $. \\
(3) There exists a measurable range function $J: \Pi \longrightarrow \lbrace  closed \ subspaces \  of \   L^{2}(D) \rbrace $ such that 
\begin{equation} \label{mi}
W=\lbrace f \in L^{2}(G) : \tilde{Z}(f)(x)\in J(x)  \text{,} \ \  \text{for a.e. }  x \in \Pi \rbrace .
\end{equation}
Identifying range functions which are equivalent a.e., the correspondence between $\Lambda$- modulation invariant spaces and measurable range funtions is one to one and onto. Moreover if $W=M^{\Lambda}(\mathcal{A})$ for some countable subset $\mathcal{A}$ of $L^{2}(G)$, the measurable range function $J$  associated to $W$ is given by
\begin{equation*}
J(x) = \overline{span} \lbrace \tilde{Z}(\phi) (x) : \phi \in \mathcal{A}\rbrace \ \  a.e.  \  x \in \Pi .  
\end{equation*}
\end{proposition}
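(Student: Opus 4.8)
The plan is to deduce everything from the translation case via the Fourier transform, which converts modulation into translation. Recall that $\mathcal{F}$ conjugates the modulation operator $M_{\lambda}$ on $L^2(G)$ to the translation operator $T_{\lambda}$ on $L^2(\widehat{G})$, i.e. $\mathcal{F}(M_{\lambda}f)=T_{\lambda}\mathcal{F}(f)$ for $\lambda\in\Lambda$. Since $G$ is second countable, so is $\widehat{G}$, and under the identification $\widehat{\widehat{G}}\cong G$ the annihilator $\Lambda^{*}$, the section $\Pi$ of $G/\Lambda^{*}$ and the section $D$ of $\widehat{G}/\Lambda$ play, for the pair $(\widehat{G},\Lambda)$, exactly the roles that $\Gamma^{*}$, $\Omega$ and $C$ play for $(G,\Gamma)$ in \eqref{Zak}. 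Hence the map $Z$ in \eqref{ztild} is the Zak transform associated to $(\widehat{G},\Lambda)$, and the analogue of \eqref{Zak} reads $Z(T_{\lambda}g)=X_{\lambda}|_{\Pi}\,Z(g)$ for $g\in L^{2}(\widehat{G})$, $\lambda\in\Lambda$. Combining the two facts, $\tilde{Z}=Z\circ\mathcal{F}$ satisfies
\begin{equation*}
\tilde{Z}(M_{\lambda}f)(x)=X_{\lambda}(x)\,\tilde{Z}(f)(x),\qquad x\in\Pi,\ \lambda\in\Lambda,\ f\in L^{2}(G),
\end{equation*}
the modulation analogue of \eqref{zmp}. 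This intertwining identity is the only place where the specific structure enters; the rest is formal.

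For $(1)\Leftrightarrow(2)$ I would argue as follows. Since $\tilde{Z}$ is an isometric isomorphism of $L^{2}(G)$ onto $L^{2}(\Pi,L^{2}(D))$, it carries the closed subspace $W$ onto a closed subspace $\tilde{Z}(W)$, and by the displayed identity $M_{\lambda}W\subseteq W$ for all $\lambda\in\Lambda$ holds if and only if $X_{\lambda}|_{\Pi}\,\tilde{Z}(W)\subseteq\tilde{Z}(W)$ for all $\lambda\in\Lambda$. The latter is precisely the statement that $\tilde{Z}(W)$ is multiplicatively invariant with respect to the determining set $\mathcal{D}=\lbrace X_{\lambda}|_{\Pi}:\lambda\in\Lambda\rbrace$, which is (2).

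For $(2)\Leftrightarrow(3)$ I would invoke Proposition \ref{p1.1} with $\Omega=\Pi$, $\mathcal{H}=L^{2}(D)$ and the determining set $\mathcal{D}$; separability of $L^{2}(\Pi)$ follows from second countability of $G$, which makes $L^{2}(G)$ and hence $L^{2}(\Pi,L^{2}(D))$ separable. Proposition \ref{p1.1} produces a measurable range function $J$ with $\tilde{Z}(W)=\lbrace\phi\in L^{2}(\Pi,L^{2}(D)):\phi(x)\in J(x)\ \text{a.e.}\rbrace$, and applying $\tilde{Z}^{-1}$ gives \eqref{mi}. The one-to-one and onto correspondence between $\Lambda$-modulation invariant spaces and measurable range functions is then the composite of two bijections: the correspondence $W\leftrightarrow\tilde{Z}(W)$ between $\Lambda$-modulation invariant subspaces of $L^{2}(G)$ and $\mathcal{D}$-multiplicatively invariant subspaces of $L^{2}(\Pi,L^{2}(D))$ (a bijection because $\tilde{Z}$ is an isometric isomorphism intertwining the two notions of invariance, by the first paragraph), and the correspondence of Proposition \ref{p1.1}.

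For the "moreover" part, suppose $W=M^{\Lambda}(\mathcal{A})=\overline{span}\lbrace M_{\lambda}\phi:\phi\in\mathcal{A},\ \lambda\in\Lambda\rbrace$. Continuity and linearity of $\tilde{Z}$ together with the intertwining identity give $\tilde{Z}(W)=\overline{span}\lbrace X_{\lambda}|_{\Pi}\,\tilde{Z}(\phi):\phi\in\mathcal{A},\ \lambda\in\Lambda\rbrace$, so $\tilde{Z}(W)$ is the smallest $\mathcal{D}$-multiplicatively invariant space containing $\lbrace\tilde{Z}(\phi):\phi\in\mathcal{A}\rbrace$; the last assertion of Proposition \ref{p1.1} then identifies the associated range function as $J(x)=\overline{span}\lbrace\tilde{Z}(\phi)(x):\phi\in\mathcal{A}\rbrace$ for a.e. $x\in\Pi$. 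The step that actually requires care is the first paragraph: one must check that $\mathcal{F}$ conjugates $M_{\lambda}$ to $T_{\lambda}$ with the correct normalization, and that $\Pi$, $D$, $\Lambda^{*}$ match the data in \eqref{Zak} under Pontryagin duality; once this bookkeeping is in place, the result is a routine transfer of \cite[Theorem 6.5]{BHP}.
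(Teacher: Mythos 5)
Your proposal is correct and follows exactly the route the paper intends: the paper omits the proof, remarking only that $\tilde{Z}$ turns $\Lambda$-modulation invariant spaces into multiplicatively invariant ones and that the argument is the same as the translation case \cite[Theorem 6.5]{BHP}; your transfer via $\mathcal{F}(M_{\lambda}f)=T_{\lambda}\hat{f}$, the intertwining identity $\tilde{Z}(M_{\lambda}f)=X_{\lambda}|_{\Pi}\tilde{Z}(f)$, and Proposition \ref{p1.1} applied to $\tilde{Z}(W)$ is precisely that argument, carried out in detail. The bookkeeping of $\Lambda^{*}$, $\Pi$, $D$ under Pontryagin duality matches the paper's conventions, so nothing is missing.
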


A bounded linear operator $U$ on $L^2(G)$ is said to be modulation preserving with respect to $\Lambda$, if for every $\lambda \in \Lambda$, $UM_{\lambda} = M_{\lambda}U$, where $M_{\lambda}$ is the modulation oparator. Our goal in this section is a charactrization of $\Lambda$- modulation preserving operators in terms of range operators. Let $W$ be a $\Lambda$- modulation invariant space with the range function $J$. A range operator on $J$ is a mapping $R$ from the Borel section $\Pi$ of $G/ \Lambda ^* $ to the set of all bounded linear operators on closed subspaces of $L^{2}(D)$, where $D$ is a Borel section of $\widehat{G}/ \Lambda$, so that the domain of $R(x)$ equals $J(x)$ for a.e. $x \in \Pi$. A range operator $R$ is called measurable, if the mapping $x \mapsto \langle R(x)P_{J}(x)(a) , b \rangle$ is measurable for all $a,b \in L^2(D)$, where $P_{J}(x)$ is the orthogonal projection of $L^{2}(D)$ onto $J(x)$.
Suppose that $U$ is a $\Lambda$- modulation preserving operator on a $\Lambda$- modulation invariant space $W$ and $\tilde{Z}$ is as in \eqref{ztild}. We can define an induced functorial operator on the multiplicatively invariant space $\tilde{Z}(W)$ as
\begin{equation} \label{ioz}
U'' : \tilde{Z}(W) \longrightarrow  L^2(\Pi ,  L^2(D))  , \ U''(\tilde{Z} f)= \tilde{Z}(Uf).
\end{equation}
For $f \in W$,
\begin{eqnarray*}
U''(X_{\lambda}  \tilde{Z} f)(x)&=& U''(X_{\lambda} Z \hat{f})(x) \\
&=& U'' (Z(T_{\lambda} \hat{f}))(x) \\
&=& U'' (Z(\widehat{M_{\lambda}f})(x) \\
&=& U'' (\tilde{Z}(M_{\lambda}f))(x) \\
&=& \tilde{Z}(UM_{\lambda}f)(x) \\
&=& \tilde{Z}(M_{\lambda}Uf)(x) \\ 
&=& X_{\lambda}(x) \tilde{Z}(Uf)(x) \\
&=&X_{\lambda}(x)U''(\tilde{Z} f)(x),
\end{eqnarray*} 
where $X_{\lambda}$ is the corresponding character on $G$, for $\lambda \in \Lambda$. Consequently, for a $\Lambda$- modulation preserving operator $U$, the operator $U''$ defined in \eqref{ioz} is a multiplication preserving operator on $\tilde{Z}(W)$. 

In the sequel (Theorem \ref{t3.6}), we apply Proposition \ref{t3.3} to the operator $U''$ so that we can characterize the modulation preserving operator $U$. We need the following lemma which shows that using the Fourier transform, we can transform modulation preserving operators on $L^2(G)$ into translation preserving operators on $L^2(\widehat{G})$.
\begin{lemma} \label{remark}
For a $\Lambda$- modulation preserving operator $U: W \longrightarrow L^2(G)$ on a $\Lambda$- modulation invariant space $W$, the operator \begin{equation*} 
\mathcal{F} \ o \ U \ o \ \mathcal{F}^{-1}  : \mathcal{F}(W) \longrightarrow  L^2(\widehat{G})
\end{equation*}
is  $\Lambda$- translation preserving. Moreover for $ f \in W$, 
 \begin{equation} \label{tarkib}
 U'' (\tilde{Z} f)(x) = ( \mathcal{F} \ o \ U \ o \ \mathcal{F}^{-1})'(Z \hat{f}(x)), 
 \end{equation}
 in which $U''$ is as in \eqref{ioz}.
\end{lemma}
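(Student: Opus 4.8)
The plan is to verify the two assertions of the lemma directly, since both are essentially bookkeeping with the Fourier transform and the defining intertwining relations.

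First I would prove that $\mathcal{F} \circ U \circ \mathcal{F}^{-1}$ is $\Lambda$-translation preserving. Recall the standard identity $\mathcal{F}(M_{\lambda} f) = T_{\lambda}(\mathcal{F} f)$ for $\lambda \in \Lambda$ (more precisely $\widehat{M_\lambda f} = T_\lambda \hat f$, which is exactly the relation already invoked in the computation preceding the lemma). So for $g \in \mathcal{F}(W)$, writing $g = \mathcal{F} f$ with $f \in W$, I compute
\begin{align*}
(\mathcal{F} \circ U \circ \mathcal{F}^{-1})(T_{\lambda} g) &= \mathcal{F} U \mathcal{F}^{-1}(T_{\lambda} \mathcal{F} f) = \mathcal{F} U \mathcal{F}^{-1}(\mathcal{F} M_{\lambda} f) \\
&= \mathcal{F}(U M_{\lambda} f) = \mathcal{F}(M_{\lambda} U f) = T_{\lambda}(\mathcal{F} U f) \\
&= T_{\lambda}(\mathcal{F} U \mathcal{F}^{-1} g),
\end{align*}
using that $U$ is $\Lambda$-modulation preserving in the middle step. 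This is exactly the statement that $\mathcal{F} \circ U \circ \mathcal{F}^{-1}$ commutes with $T_{\lambda}$ on $\mathcal{F}(W)$. I should also note in passing that $\mathcal{F}(W)$ is indeed $\Lambda$-translation invariant (again from $\mathcal{F}(M_\lambda f) = T_\lambda \mathcal{F} f$ and the $\Lambda$-modulation invariance of $W$), so that the notion of translation preserving operator applies and the induced operator $(\mathcal{F} \circ U \circ \mathcal{F}^{-1})'$ of \eqref{io} is defined.

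Next I would establish \eqref{tarkib}. Set $B := \mathcal{F} \circ U \circ \mathcal{F}^{-1}$, a translation preserving operator on the translation invariant space $\mathcal{F}(W)$. By the definition \eqref{io} of the induced operator, for $h \in \mathcal{F}(W)$ we have $B'(Zh) = Z(Bh)$. Take $h = \hat f = \mathcal{F} f$ with $f \in W$; then $Bh = \mathcal{F} U \mathcal{F}^{-1} \mathcal{F} f = \mathcal{F}(Uf) = \widehat{Uf}$, so $B'(Z\hat f)(x) = Z(\widehat{Uf})(x)$. On the other hand, by \eqref{ztild} and \eqref{ioz}, $U''(\tilde Z f)(x) = \tilde Z(Uf)(x) = Z(\mathcal{F}(Uf))(x) = Z(\widehat{Uf})(x)$. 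Comparing the two expressions gives $U''(\tilde Z f)(x) = B'(Z\hat f)(x)$ for a.e. $x \in \Pi$, which is precisely \eqref{tarkib}. (Here I also use $\tilde Z f = Z \hat f$, immediate from \eqref{ztild}.)

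I do not anticipate a serious obstacle: the argument is a chain of substitutions resting on the intertwining relation $\widehat{M_\lambda f} = T_\lambda \hat f$ and the definitions of $U'$, $U''$, $\tilde Z$. The only point requiring a little care is making sure the domains match up, namely that $\mathcal{F}$ carries $W$ onto a genuinely $\Lambda$-translation invariant subspace $\mathcal{F}(W)$ of $L^2(\widehat G)$ so that $(\mathcal{F}\circ U\circ\mathcal{F}^{-1})'$ is legitimately defined via \eqref{io}; this follows because $\mathcal{F}$ is a unitary and $\mathcal{F}M_\lambda = T_\lambda\mathcal{F}$, so $T_\lambda \mathcal{F}(W) = \mathcal{F}(M_\lambda W) \subseteq \mathcal{F}(W)$. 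With that observation in place, both claims of the lemma are formal consequences of the computations above.
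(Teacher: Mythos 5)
Your proposal is correct and follows essentially the same route as the paper: the translation-preserving property comes from the intertwining relation $\mathcal{F}M_{\lambda}=T_{\lambda}\mathcal{F}$ together with $UM_{\lambda}=M_{\lambda}U$, and \eqref{tarkib} is the same chain of substitutions through the definitions \eqref{io}, \eqref{ioz} and $\tilde{Z}=Z\circ\mathcal{F}$. Your added remark that $\mathcal{F}(W)$ is genuinely $\Lambda$-translation invariant (so that the induced operator $(\mathcal{F}\circ U\circ\mathcal{F}^{-1})'$ is well defined) is a small point the paper leaves implicit, but otherwise the two arguments coincide.
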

\begin{proof}
For $\lambda \in \Lambda$ we have the following calculations
\begin{eqnarray*}
(\mathcal{F} \ o \ U \ o \ \mathcal{F}^{-1} ) T _{\lambda} &=& \mathcal{F} \ o \ U \ o \ M_{\lambda} \ o \ \mathcal{F}^{-1} \\
&=& \mathcal{F} \ o \  M_{\lambda} \ o \ U \ o \  \mathcal{F}^{-1} \\
&=& T _{\lambda} ( \mathcal{F} \ o \ U \ o \ \mathcal{F}^{-1}) .
\end{eqnarray*}
Let $Z$ be the Zak transform. By \eqref{zmp}, there exists an induced operator 
\begin{equation*} 
( \mathcal{F} \ o \ U \ o \ \mathcal{F}^{-1})' : Z(\mathcal{F}(W)) \longrightarrow  L^2(\Pi ,  L^2(D))  
\end{equation*}
given by
 \begin{equation} \label{2.1.}
 ( \mathcal{F} \ o \ U \ o \ \mathcal{F}^{-1})'(Z \hat{f})= Z( \mathcal{F} \ o \ U \ o \ \mathcal{F}^{-1}\hat{f}) \ , \ f \in W
\end{equation}
 which is multiplication preserving. For the moreover part, we have for $ f \in W$, 
\begin{eqnarray*}
U'' (\tilde{Z} f)(x) &=& \tilde{Z}(U f)(x) \\
&=& (Z \ o \ \mathcal{F} \ o \ U) f (x ) \\
 & =& Z (\mathcal{F} \ o \ U \ o \ \mathcal{F}^{-1}) \hat{f}(x) \\
 & =& ( \mathcal{F} \ o \ U \ o \ \mathcal{F}^{-1})'(Z \hat{f}(x)).
\end{eqnarray*}
\end{proof}
 Now we can characterize modulation preserving operators on $L^2(G)$ in terms of range operators.
\begin{theorem} \label{t3.6}
Let $ W \subseteq L^{2}(G)$ be a $\Lambda$- modulation invatiant subspace with the range function $J$ and $U: W \longrightarrow L^2(G)$ be a bounded linear operator. Then the following are equivalent. \\
(1)  $U$ is modulation preserving with respect to $\Lambda$. \\
(2) The induced operator $U''$ is multiplication preserving.\\
(3) There exists a measurable range operator $R$ on $J$ such that for all $\phi \in W$,
\begin{equation*}
\tilde{Z} (U\phi)(x) = R(x)(\tilde{Z} \phi(x)) \ \ a.e. \ x \in \Pi .
\end{equation*} 
The correspondence between $U$ and $R$ is one to one under the usual convention that the range operators are identified if they are equal a.e.. Moreover $\Vert U \Vert = ess  \sup \Vert R(\omega) \Vert $.
\end{theorem}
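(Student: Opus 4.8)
The plan is to transfer everything to the multiplicatively invariant space $\tilde{Z}(W)\subseteq L^{2}(\Pi,L^{2}(D))$ and let Proposition \ref{t3.3} do the work on the induced operator $U''$, exactly as the translation case was reduced to Proposition \ref{t3.3}. Since $G$ is second countable, $L^{2}(\Pi)$ is separable, so Proposition \ref{t3.3} applies to $\tilde{Z}(W)$, which is multiplicatively invariant with respect to $\mathcal{D}=\lbrace X_{\lambda}\vert_{\Pi}:\lambda\in\Lambda\rbrace$ by Proposition \ref{p1.2}. One first checks that $U''$ is a well-defined bounded operator on $\tilde{Z}(W)$: it is well-defined because $\tilde{Z}$ is injective, and $\|U''(\tilde{Z}f)\|_{2}=\|Uf\|_{2}\le\|U\|\,\|\tilde{Z}f\|_{2}$, so in fact $\|U''\|=\|U\|$ because $\tilde{Z}$ is an isometric isomorphism.

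For $(1)\Rightarrow(2)$ there is nothing new: the chain of equalities displayed just before Lemma \ref{remark} shows that $UM_{\lambda}=M_{\lambda}U$ for all $\lambda\in\Lambda$ forces $U''(X_{\lambda}\tilde{Z}f)(x)=X_{\lambda}(x)U''(\tilde{Z}f)(x)$, i.e. $U''$ is multiplication preserving with respect to $\mathcal{D}$, hence --- by $(1)\Leftrightarrow(2)$ of Proposition \ref{t3.3} --- with respect to $L^{\infty}(\Pi)$. For $(2)\Rightarrow(3)$, note that if $W=M^{\Lambda}(\mathcal{A})$ (which is available, e.g., via Lemma \ref{t2.3}) then $\tilde{Z}(\mathcal{A})$ generates $\tilde{Z}(W)$ and, by Proposition \ref{p1.2}, $J(x)=\overline{span}\lbrace\tilde{Z}\phi(x):\phi\in\mathcal{A}\rbrace$ is exactly the range function of $\tilde{Z}(W)$ appearing in Proposition \ref{t3.3}. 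Applying $(2)\Rightarrow(3)$ of Proposition \ref{t3.3} to $U''$ gives a measurable range operator $R$ on $J$ with $U''(\psi)(x)=R(x)(\psi(x))$ a.e. for every $\psi\in\tilde{Z}(W)$; choosing $\psi=\tilde{Z}\phi$ and using $U''(\tilde{Z}\phi)=\tilde{Z}(U\phi)$ from \eqref{ioz} gives (3).

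For $(3)\Rightarrow(1)$, fix $\lambda\in\Lambda$ and $\phi\in W$; since $\tilde{Z}(M_{\lambda}\phi)=Z(\widehat{M_{\lambda}\phi})=Z(T_{\lambda}\hat{\phi})=X_{\lambda}\vert_{\Pi}\cdot\tilde{Z}\phi$, we compute
\begin{align*}
\tilde{Z}(UM_{\lambda}\phi)(x)&=R(x)\bigl(X_{\lambda}(x)\tilde{Z}\phi(x)\bigr)=X_{\lambda}(x)R(x)(\tilde{Z}\phi(x))\\
&=X_{\lambda}(x)\tilde{Z}(U\phi)(x)=\tilde{Z}(M_{\lambda}U\phi)(x),
\end{align*}
and injectivity of $\tilde{Z}$ yields $UM_{\lambda}\phi=M_{\lambda}U\phi$, which is (1). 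Finally, the converse direction (every measurable range operator $R$ with $\mathrm{ess\,sup}\,\|R(x)\|<\infty$ arises from a bounded modulation preserving $U$), the one-to-one correspondence $U\leftrightarrow R$, and the norm identity $\|U\|=\mathrm{ess\,sup}\,\|R(x)\|$ all follow from the corresponding assertions of Proposition \ref{t3.3} for $U''$ together with $U=\tilde{Z}^{-1}\circ U''\circ\tilde{Z}$ and $\|U\|=\|U''\|$; the computation behind $(3)\Rightarrow(1)$ also shows that the $U$ produced from $R$ is indeed modulation preserving.

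There is no genuine obstacle here; the only real input is the equivalence between $U$ being modulation preserving and $U''$ being multiplication preserving, which has already been verified before the statement, and the routine checks are the well-definedness and boundedness of $U''$ and the matching of range functions. One could alternatively route through Lemma \ref{remark}, identifying $R$ with the range operator of the translation preserving operator $\mathcal{F}\circ U\circ\mathcal{F}^{-1}$ via \eqref{tarkib} and invoking the translation case, but the direct application of Proposition \ref{t3.3} to $U''$ is the cleanest.
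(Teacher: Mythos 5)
Your proposal is correct and follows essentially the same route as the paper: reduce everything to Proposition \ref{t3.3} applied to the induced operator $U''$ on the multiplicatively invariant space $\tilde{Z}(W)$, with the same computation for $(3)\Rightarrow(1)$ and the same transfer of the norm identity and the one-to-one correspondence. The only cosmetic difference is that for $(1)\Rightarrow(2)$ you use the direct chain of equalities displayed before Lemma \ref{remark} rather than passing through the translation preserving operator $\mathcal{F}\circ U\circ\mathcal{F}^{-1}$ as the paper's proof does, and you are slightly more careful than the paper in checking that $U''$ is well defined and bounded and that the range function of $\tilde{Z}(W)$ coincides with $J$.
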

\begin{proof}
For  $(1) \Rightarrow (2)$ assume that $U$ is a $\Lambda$- modulation preserving operator. By Lemma \ref{remark}, the operator $\mathcal{F} \ o \ U \ o \ \mathcal{F}^{-1}$ is a  $\Lambda$- translation preserving operator on $\mathcal{F}(W)$. So the induced operator $ (\mathcal{F} \ o \ U \ o \ \mathcal{F}^{-1})'$ defined as \eqref{io} is a multiplication preserving operator on the muliplicatively invariant space $\tilde{Z}(W)$. By \eqref{tarkib}, 
 \begin{eqnarray*}
U'' \tilde{Z} f (x) &=& ( \mathcal{F} \ o \ U \ o \ \mathcal{F}^{-1})'Z \hat{f}(x)\\
&=& ( \mathcal{F} \ o \ U \ o \ \mathcal{F}^{-1})' \tilde{Z}f(x), 
\end{eqnarray*}
which proves $(2)$. For $(2) \Rightarrow (3)$, suppose that $U''$ is a multiplication preserving operator. By Proposition \ref{t3.3}, there exists a measurable range operator $R$ on $J$ such that for all $\phi \in W$,
\begin{equation*}
U''(\tilde{Z} \phi)(x) = R(x) (\tilde{Z} \phi (x)).
\end{equation*}
Using \eqref{tarkib}, we obtain 
\begin{eqnarray*}
\tilde{Z}(U \phi)(x)&=& Z \ o \  \mathcal{F} (U \phi ) (x) \\
&=& Z(\mathcal{F} \ o \ U \ \mathcal{F}^{-1} \hat{\phi} ) (x) \\
&=& (\mathcal{F} \ o \ U \ o \ \mathcal{F})^{'} (Z\hat{\phi}(x)) \\
&=& U^{''} (\tilde{Z} \phi )(x) \\
&=&  R(x) (\tilde{Z} \phi (x)),
\end{eqnarray*}
which proves $(3)$.
Now assume that (3) holds. For $\lambda \in \Lambda$ and $\phi \in W$,
\begin{eqnarray*}
\tilde{Z }(U M_{\lambda} \phi)(x) &=& R(x)(\tilde{Z}(M _{\lambda} \phi)(x)) \\
&=& R(x)(X_{\lambda}(x) \tilde{Z}\phi(x)) \\
&=& X_{\lambda}(x) R(x)(\tilde{Z}\phi(x)) \\
&=& X_{\lambda}(x) \tilde{Z} U(\phi)(x) \\
&=& \tilde{Z}( M_{\lambda} U \phi)(x).
\end {eqnarray*}
Then (1) follows from the fact that $\tilde{Z}$ is one to one. Proposition \ref{t3.3} implies that the correspondence between $R$ and $U$ is unique. The moreover part follows from Proposition \ref{t3.3} and the fact that $\Vert U \Vert = \Vert U'' \Vert$.
\end{proof}

We now establish some relations between modulation preserving operators and their range operators. We show that compactness of a modulation preserving operator implies compactness of the corresponding range operator. Furthermore, using equivalent definitions of trace and Hilbert schmidt norm, we obtain a necessary condition for a compact modulation preserving operator to be Hilbert Schmidt or of finite trace. Recall that an operator $T$ on a Hilbert space $\mathcal{H}$ is called compact, if $T(B)$ is relatively compact, where $B$ is the closed unit ball in $\mathcal{H}$. 
For more details on compact oparators we refer to usual text books related to operators, for example, \cite{M, Zhu}. The  following proposition gives a necessary condition for compactness of a modulation preserving operator. The proof is similar to \cite[Theorem 3.1]{KRsh}, so we  state the proposition without proof.
\begin{proposition}
Let $ W \subseteq L^{2}(G)$ be $\Lambda$-modulation invariant space with the range function $J$. Suppose that $U: W \longrightarrow L^2(G)$ is a $\Lambda$- modulation preserving operator with the range operator $R$. If $U$ is compct, then so is $R(\omega)$ for a.e. $\omega \in \Omega$.
\end{proposition}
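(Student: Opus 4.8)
The plan is to push the statement through the isometric isomorphism $\tilde{Z}$ of \eqref{ztild} and then argue fiber by fiber. Let $U''\colon \tilde{Z}(W)\to L^{2}(\Pi,L^{2}(D))$ be the induced operator of \eqref{ioz}. Since $\tilde{Z}$ is unitary and $U''(\tilde{Z}f)=\tilde{Z}(Uf)$ for $f\in W$, the operator $U''$ is compact if and only if $U$ is, and by Theorem \ref{t3.6} we have $U''\psi(x)=R(x)\psi(x)$ for a.e.\ $x$ and every $\psi\in\tilde{Z}(W)$. So it suffices to show: if $U''$ is compact, then $R(x)$ is a compact operator on $J(x)$ for a.e.\ $x\in\Pi$.

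First I would recall the Hilbert space fact that a bounded operator $T$ on a separable Hilbert space is non-compact precisely when $\lim_{j}s_{j}(T)>0$, where $s_{1}(T)\ge s_{2}(T)\ge\cdots$ are the singular values (equivalently, there are $\varepsilon>0$ and an orthonormal sequence $(e_{j})$ in $\mathrm{dom}\,T$ with $\|Te_{j}\|\ge\varepsilon$ for all $j$), and that in that case $\mathrm{dom}\,T$ is infinite dimensional. Set $s_{\infty}(x):=\lim_{j}s_{j}(R(x))$ and $E:=\{x\in\Pi:s_{\infty}(x)>0\}$, and suppose towards a contradiction that $m(E)>0$. Writing $E=\bigcup_{n}\{x:s_{\infty}(x)\ge 1/n\}$ and using $\sigma$-finiteness of $\Pi$, I would fix $n$ and a set $E_{0}$ with $0<m(E_{0})<\infty$ on which $s_{\infty}(x)\ge\varepsilon_{0}:=1/n$; here one needs measurability of $x\mapsto s_{j}(R(x))$, which follows from the min--max description of singular values together with the measurability of $R$ and of $J$.

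The core of the argument is a measurable selection. I would construct, inductively in $j$, measurable vector fields $x\mapsto e_{j}(x)\in J(x)$ on $E_{0}$ that are orthonormal for each fixed $x$ and satisfy $\|R(x)e_{j}(x)\|\ge\varepsilon_{0}$: given $e_{1}(x),\dots,e_{j-1}(x)$, the essential norm of $R(x)$ equals $s_{\infty}(x)\ge\varepsilon_{0}$ and is unchanged by the finite-rank restriction to the orthogonal complement of $\mathrm{span}\{e_{1}(x),\dots,e_{j-1}(x)\}$ inside $J(x)$, so the (closed, nonempty) set of unit vectors $v$ in that complement with $\|R(x)v\|\ge\varepsilon_{0}$ depends measurably on $x$, and the Kuratowski--Ryll-Nardzewski selection theorem (applicable since $L^{2}(D)$ is Polish) yields a measurable selector $e_{j}(x)$. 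Checking the measurability hypotheses of the selection theorem in this direct-integral setting is the bulk of the work, and is the main obstacle; it is handled exactly as in \cite[Theorem 3.1]{KRsh}.

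Finally I would set $\psi_{j}:=m(E_{0})^{-1/2}\,\chi_{E_{0}}\,e_{j}(\cdot)$. Then $\psi_{j}(x)\in J(x)$ a.e., so $\psi_{j}\in\tilde{Z}(W)$ by Proposition \ref{p1.2}, and $(\psi_{j})_{j}$ is orthonormal in $L^{2}(\Pi,L^{2}(D))$ because the $e_{j}(x)$ are fiberwise orthonormal on $E_{0}$; in particular $\psi_{j}\rightharpoonup 0$. Compactness of $U''$ would force $\|U''\psi_{j}\|_{2}\to 0$, whereas
\begin{equation*}
\|U''\psi_{j}\|_{2}^{2}\;=\;\frac{1}{m(E_{0})}\int_{E_{0}}\|R(x)e_{j}(x)\|^{2}\,dm(x)\;\ge\;\varepsilon_{0}^{2},
\end{equation*}
a contradiction. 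Hence $m(E)=0$, i.e.\ $R(x)$ is compact for a.e.\ $x\in\Pi$, as asserted.
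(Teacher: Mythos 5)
Your argument is essentially the intended one: the paper omits the proof and points to \cite[Theorem 3.1]{KRsh}, whose strategy is exactly your contrapositive-plus-measurable-selection scheme (pass to the decomposable operator $U''$, select a fiberwise orthonormal field $e_j(x)$ witnessing non-compactness on a set of finite positive measure, and contradict complete continuity of $U''$ with the orthonormal sequence $\psi_j = m(E_0)^{-1/2}\chi_{E_0}e_j$). One small repair: the set of unit vectors $v$ in the orthogonal complement with $\Vert R(x)v\Vert \ge \varepsilon_0$ may be empty when $\Vert R(x)\vert_{F^\perp}\Vert$ equals $\varepsilon_0$ exactly without being attained, since the norm is only a supremum; you should select with the relaxed threshold $\Vert R(x)v\Vert \ge \varepsilon_0/2$ (or $\varepsilon_0(1-1/j)$), which keeps the set nonempty and closed and still yields $\Vert U''\psi_j\Vert_2^2 \ge \varepsilon_0^2/4$, enough for the contradiction. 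With that adjustment the proof is complete.
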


Let $U$ be an operator on a Hilbert space $\mathcal{H}$ and $E$ be an orthonormal basis for $\mathcal{H}$. The Hilbert-Schmidt norm of $U$, denoted by $\Vert U \Vert _{HS}$, is defined as 
\begin{equation} \label{cop1}
\Vert U \Vert _{HS} = \left( \sum_{x\in E} \Vert Ux \Vert^2 \right) ^{\frac{1}{2}}.
\end{equation}
 The operator  $U$ is called of finite trace if $tr(U) < \infty$, in which
 \begin{equation} \label{cop2}
tr(U) = \sum_{x \in E} \langle Ux,x\rangle
\end{equation}
  is the trace of $U$. Note that the definitions are independent of the choice of orthonormal basis.
%
%
%
%
%
The following lemma shows that we can benefit from Parseval frames instead of orthonormal bases in \eqref{cop1} and \eqref{cop2}. The proof is easy, so is omitted. For a complete proof see \cite[Lemma 3.2]{MRK}.
\begin{lemma} \label{l3.8}
 If $U$ is an operator on a  Hilbert space $\mathcal{H}$  and $F $ is a Parseval frame for $\mathcal{H}$, then $\Vert U \Vert _{HS} = \left( \sum_{y \in F} \Vert Uy \Vert^2 \right) ^{\frac{1}{2}}$. In particular if $U$ is positive,  $tr(U) = \sum_{y \in F} \langle Uy ,y\rangle$.
 \end{lemma}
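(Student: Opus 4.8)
The plan is to reduce the Parseval--frame statement to the familiar facts (recalled just above in the text) that the Hilbert--Schmidt norm may be computed from any orthonormal basis and that it is unchanged under passing to the adjoint. First I would fix an orthonormal basis $E$ of $\mathcal{H}$ and, for each $y \in F$, expand $\Vert Uy \Vert^{2} = \sum_{x \in E} |\langle Uy , x \rangle|^{2} = \sum_{x \in E} |\langle y , U^{*}x \rangle|^{2}$. Summing over $y \in F$ and interchanging the two (unordered, nonnegative) sums by Tonelli's theorem gives $\sum_{y \in F} \Vert Uy \Vert^{2} = \sum_{x \in E} \sum_{y \in F} |\langle y , U^{*}x \rangle|^{2}$.

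The key step is then the defining property of a Parseval frame: for every $z \in \mathcal{H}$ one has $\sum_{y \in F} |\langle z , y \rangle|^{2} = \Vert z \Vert^{2}$. Applying this with $z = U^{*}x$ collapses the inner sum to $\Vert U^{*}x \Vert^{2}$, so that $\sum_{y \in F} \Vert Uy \Vert^{2} = \sum_{x \in E} \Vert U^{*}x \Vert^{2}$. A second application of Tonelli (expanding $\Vert U^{*}x\Vert^{2}$ over $E$ and reordering) identifies the right-hand side with $\sum_{x \in E} \Vert Ux \Vert^{2} = \Vert U \Vert_{HS}^{2}$, which proves the first assertion; in particular the identity holds as an equality in $[0,\infty]$, with no a priori assumption that $U$ be Hilbert--Schmidt.

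For the trace statement I would use positivity of $U$ to write $U = (U^{1/2})^{*}U^{1/2}$, where $U^{1/2}$ is the positive square root. Then, for any orthonormal basis $E$, $tr(U) = \sum_{x \in E} \langle Ux , x \rangle = \sum_{x \in E} \Vert U^{1/2}x \Vert^{2} = \Vert U^{1/2} \Vert_{HS}^{2}$, and applying the first part of the lemma to the operator $U^{1/2}$ yields $\Vert U^{1/2} \Vert_{HS}^{2} = \sum_{y \in F} \Vert U^{1/2}y \Vert^{2} = \sum_{y \in F} \langle Uy , y \rangle$, as claimed.

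I do not expect a genuine obstacle here; the only points deserving a word of care are the interchanges of the possibly uncountable sums, which are justified purely by nonnegativity of the summands, and the observation that every quantity in sight is allowed to take the value $+\infty$, so that all the displayed equalities are equalities in $[0,\infty]$.
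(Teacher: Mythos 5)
Your argument is correct and complete: the interchange of nonnegative sums, the Parseval-frame identity applied to $U^{*}x$, the fact that $\Vert U\Vert_{HS}=\Vert U^{*}\Vert_{HS}$, and the reduction of the trace claim to the Hilbert--Schmidt claim for $U^{1/2}$ are exactly the right ingredients, and your remark that all equalities hold in $[0,\infty]$ takes care of the only delicate point. The paper itself omits the proof (deferring to Lemma 3.2 of the cited preprint \cite{MRK}), and the standard argument intended there is precisely the one you give.
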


We have the following proposition which states that a compact range operator is Hilbert Schmidt or of finite trace whenever the corresponding modulation preserving operator has the same properties.

\begin{proposition}
Suppose that $ W \subseteq L^{2}(G)$ is a $\Lambda$- modulation invariant space with the range function $J$. Let $U: W \longrightarrow W$ be a compact  modulation preserving operator with the range operator $R$.\\
(1) If $U$ is Hilbert Schmidt then so is $R(\omega)$ for a.e. $\omega \in \Omega$. \\
(2) If $U$ is positive and of finite trace then so is $R(\omega)$ for a.e. $\omega \in \Omega$.

\end{proposition}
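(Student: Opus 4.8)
The plan is to transport the problem to the fibered picture via $\tilde{Z}$ and Theorem \ref{t3.6}, decompose $W$ into cyclic pieces by Lemma \ref{t2.3}, and then compute the Hilbert--Schmidt norm and the trace fiberwise using Parseval frames (Lemma \ref{l3.8}). First I would record the ingredients: by Theorem \ref{t3.6}, $\tilde{Z}(U\phi)(x)=R(x)(\tilde{Z}\phi(x))$ for every $\phi\in W$ and a.e.\ $x\in\Pi$, and since $U$ maps $W$ into $W$ each $R(x)$ is a bounded operator on $J(x)$; by Lemma \ref{t2.3} there are $\phi_n\in W$ with $\{M_\lambda\phi_n:\lambda\in\Lambda\}$ a Parseval frame for $M^\Lambda(\phi_n)$ and $W=\bigoplus_n M^\Lambda(\phi_n)$, so that $E^\Lambda(\{\phi_n\})=\{M_\lambda\phi_n:\lambda\in\Lambda,\ n\in\mathbb N\}$ is a Parseval frame for $W$ (continuous if $\Lambda$ is not discrete, being an orthogonal union of the Parseval frames of the summands); and by Proposition \ref{fr} together with Proposition \ref{p1.2}, $\{\tilde{Z}\phi_n(x):n\in\mathbb N\}$ is a Parseval frame for $J(x)$ for a.e.\ $x$. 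I would also observe at the outset that conjugation by the unitaries $M_\lambda$ fixes $U$ (because $M_\lambda^{*}UM_\lambda=M_{-\lambda}M_\lambda U=U$), so Lemma \ref{l3.8} applied to $E^\Lambda(\{\phi_n\})$ gives
\[
\|U\|_{HS}^{2}=\sum_n\int_\Lambda\|UM_\lambda\phi_n\|^2\,d\lambda=|\Lambda|\sum_n\|U\phi_n\|^2 ,
\]
and, for $U\ge 0$, $tr(U)=|\Lambda|\sum_n\langle U\phi_n,\phi_n\rangle$, where $|\Lambda|$ denotes the Haar measure of $\Lambda$ (so $|\Lambda|<\infty$ precisely when $\Lambda$ is compact, and the $\lambda$-integral reduces to a sum when $\Lambda$ is discrete).

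For part (1) I would apply Lemma \ref{l3.8} inside $J(x)$, which is the domain of $R(x)$: for a.e.\ $x$, $\|R(x)\|_{HS}^{2}=\sum_n\|R(x)(\tilde{Z}\phi_n(x))\|^2=\sum_n\|\tilde{Z}(U\phi_n)(x)\|^2$. Integrating over $\Pi$, interchanging sum and integral (legitimate since all terms are non-negative) and using that $\tilde{Z}$ is isometric, this yields $\int_\Pi\|R(x)\|_{HS}^{2}\,dx=\sum_n\|U\phi_n\|_{L^2(G)}^{2}$. Comparing with the identity for $\|U\|_{HS}^2$ above: if $U$ is Hilbert--Schmidt and $|\Lambda|<\infty$, then $\int_\Pi\|R(x)\|_{HS}^2\,dx<\infty$, hence $\|R(x)\|_{HS}<\infty$ a.e.; and if $|\Lambda|=\infty$, then $U$ Hilbert--Schmidt forces $\sum_n\|U\phi_n\|^2=0$, i.e.\ $U=0$ on $W$ and $R(x)=0$ a.e. In either case $R(x)$ is Hilbert--Schmidt for a.e.\ $x$.

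For part (2) I would first verify that $U\ge 0$ forces $R(x)\ge 0$ for a.e.\ $x$. Since $\tilde{Z}(W)$ is multiplicatively invariant with respect to $L^\infty(\Pi)$ (Propositions \ref{p1.1} and \ref{p1.2}), for $\phi\in W$ and $g\in L^\infty(\Pi)$ there is $\psi\in W$ with $\tilde{Z}\psi=g\,\tilde{Z}\phi$, and $0\le\langle U\psi,\psi\rangle=\int_\Pi|g(x)|^2\langle R(x)\tilde{Z}\phi(x),\tilde{Z}\phi(x)\rangle\,dx$; letting $g$ range over $L^\infty(\Pi)$ and then $\phi$ over $\{\phi_n\}$ and their rational linear combinations (using $J(x)=\overline{span}\{\tilde{Z}\phi_n(x)\}$) gives $R(x)\ge0$ a.e. Then the positive-operator part of Lemma \ref{l3.8} applied fiberwise gives $tr(R(x))=\sum_n\langle\tilde{Z}(U\phi_n)(x),\tilde{Z}\phi_n(x)\rangle\ge 0$ a.e., so (Tonelli again) $\int_\Pi tr(R(x))\,dx=\sum_n\langle U\phi_n,\phi_n\rangle$, while the positive-operator part of Lemma \ref{l3.8} applied to $E^\Lambda(\{\phi_n\})$ gives $tr(U)=|\Lambda|\sum_n\langle U\phi_n,\phi_n\rangle$. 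Exactly as in part (1), $tr(U)<\infty$ now forces $tr(R(x))<\infty$ for a.e.\ $x$.

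The step I expect to be the main obstacle is the fiberwise Hilbert--Schmidt (and trace) identity: one must be careful that $\{\tilde{Z}\phi_n(x)\}_n$ is a Parseval frame for exactly the space $J(x)$ on which $R(x)$ is declared to act — this is where Proposition \ref{fr}, Proposition \ref{p1.2}, and the domain convention built into the definition of a range operator have to be assembled carefully — and that Lemma \ref{l3.8} is being legitimately applied to the possibly infinite-dimensional operator $R(x)$ on $J(x)$. The interchanges of summation and integration are harmless because every integrand is non-negative, and the bookkeeping with the (possibly infinite) Haar measure $|\Lambda|$ is harmless too, since the only configuration it does not settle at once is the one in which $U$ is forced to vanish.
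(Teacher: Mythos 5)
Your proof is correct and takes essentially the same route as the paper's: decompose $W$ via Lemma \ref{t2.3}, pass to the fibers with Proposition \ref{fr} and Theorem \ref{t3.6}, and compute the Hilbert--Schmidt norm and the trace fiberwise using the Parseval-frame versions in Lemma \ref{l3.8} together with Tonelli. You are in fact somewhat more careful than the paper, which glosses over the Haar-measure factor $m_{\Lambda}(\Lambda)$ in the frame identity for $\Vert U\Vert_{HS}$ and over the positivity of $R(x)$ that must be checked before the trace formula of Lemma \ref{l3.8} can be applied fiberwise.
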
 
\begin{proof}
First note that by Lemma \ref{t2.3} , $\{ M_{\lambda} \phi _{n}  :  \lambda \in \Lambda , \ n \in \mathbb{N} \}$ is a continuous Parseval frame for $W= \bigoplus _{n \in \mathbb{N}} M^{\Lambda}(\phi _{n})$ and hence by Proposition \ref{fr}, the set $\{ \tilde{Z}(\phi _{n}) (\omega) : \ n \in \mathbb{N} \} $ is a Parseval frame for $J(\omega)$, for a.e. $\omega \in \Omega$. Let $U$ be Hilbert Schmidt. Then
\begin{equation*}
\int_{\Lambda} \sum_{ n \in \mathbb{N}} \Vert M_{\lambda} U \phi _{n} \Vert ^2 dm_{\Lambda}(\lambda) = \sum_{ n \in \mathbb{N}} \int_{\Lambda} \Vert U(M_{\lambda} \phi _{n}) \Vert ^2 dm_{\Lambda}(\lambda)  = \Vert U \Vert _{HS}^2 < \infty .
\end{equation*}
Using the fact that $\tilde{Z}$ is isometry and Theorem \ref{t3.6}, we obtain 
\begin{align*}
\infty &>  \sum_{ n \in \mathbb{N}} \Vert U \phi _{n} \Vert ^2 \cr
&= \sum_{ n \in \mathbb{N}}\Vert \tilde{Z}  U \phi _{n}  \Vert ^2 \cr
&= \sum_{ n \in \mathbb{N}} \int _{\Omega} \Vert \tilde{Z}  U \phi _{n} (\omega) \Vert ^2 d \omega \cr
&= \int _{\Omega} \sum_{ n \in \mathbb{N}}\Vert \tilde{Z} U \phi _{n} (\omega) \Vert ^2 d \omega \cr
&= \int _{\Omega} \sum_{ n \in \mathbb{N}} \Vert R(\omega) (\tilde{Z}(\phi _{n})(\omega)) \Vert ^2 d\omega \cr
&= \int _{\Omega} \Vert R(\omega) \Vert _{HS} ^2 \  d\omega , 
\end{align*}
which shows that $R(\omega)$ is Hilbert Schmidt,  for a.e. $\omega \in \Omega$. 
If $U$ is positive and of finite trace, then by  the fact that $\tilde{Z}$ is isometry (in the second equality below), and Theorem \ref{t3.6} (in the third equality below), we have
\begin{align*}
\infty > tr(U)  &= \sum_{ n \in \mathbb{N}} \int_{\Gamma} \langle U M_{\lambda} \phi _{n} , M_{\lambda} \phi _{n} \rangle dm_{\Lambda}(\lambda) \cr
&= \sum_{ n \in \mathbb{N}} \int_{\Lambda} \langle \tilde{Z} U M_{\lambda} \phi _{n} , \tilde{Z} M_{\lambda} \phi _{n} \rangle dm_{\Lambda}(\lambda) \cr
&= \sum_{  n \in \mathbb{N}}  \int_{\Lambda} \int _{\Omega} \langle R(\omega )( \tilde{Z} M_{\lambda} \phi _{n} (\omega)) , \tilde{Z} M_{\lambda} \phi _{n} (\omega)  \rangle d \omega dm_{\Lambda}(\lambda)  \cr
&= \sum_{  n \in \mathbb{N}} \int_{\Lambda} \int _{\Omega} \langle R(\omega )( X _{\lambda} \tilde{Z}  \phi _{n} (\omega)) , X_{\lambda} \tilde{Z} \phi _{n} (\omega)  \rangle d \omega dm_{\Lambda}(\lambda) \cr
&= \int_{\Lambda} \sum_{  n \in \mathbb{N}}  \int _{\Omega} \langle R(\omega )( X _{\lambda} \tilde{Z}  \phi _{n} (\omega)) , X_{\lambda} \tilde{Z} \phi _{n} (\omega)  \rangle d \omega dm_{\Lambda}(\lambda) .
\end{align*}
So
\begin{align*}
\infty &> \sum_{ n \in \mathbb{N}} \int _{\Omega} \langle R(\omega )( \tilde{Z}  \phi _{n} (\omega)) , \tilde{Z} \phi _{n} (\omega)  \rangle d \omega  \cr
&=  \int _{\Omega} \sum_{ n \in \mathbb{N}} \langle R(\omega )(  \tilde{Z}  \phi _{n} (\omega)) , \tilde{Z} \phi _{n} (\omega)  \rangle d\omega, \ a.e. \ \omega \in \Omega.
\end{align*}
Thus  $R(\omega)$ is of finite trace, for a.e. $\omega \in \Omega$. 
\end{proof} 
Our next proposition states that a necessary and sufficient condition for a modulation preserving operator to be isometric (self adjoint) is that its corresponding range operator is  isometric (self adjoint). The proof is similar to \cite[Propositions 3.4, 3.5]{KRsh} and  is omitted.
\begin{proposition}
Suppose that $ W \subseteq L^{2}(G)$ is a $\Lambda$- modulation invariant space with the range function $J$. Let $U: W \longrightarrow W$ be a compact  $\Lambda$- modulation preserving operator with the range operator $R$. Then\\
(1) $U$ is isometry if and only if so is $R(\omega)$ for a.e. $\omega \in \Omega$. \\
(2) $U$ is self adjoint if and only if so is $R(\omega)$ for a.e. $\omega \in \Omega$.
\end{proposition}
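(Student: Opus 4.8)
The plan is to reduce both equivalences to the uniqueness part of Theorem \ref{t3.6} together with one auxiliary identity: the range operator of the adjoint is the pointwise adjoint of the range operator. First I would note that, since $W$ is $\Lambda$-modulation invariant and $M_\lambda^{*}=M_{\lambda^{-1}}$ with $\lambda^{-1}\in\Lambda$, taking adjoints in $UM_\lambda=M_\lambda U$ shows that $U^{*}\colon W\to W$ is again $\Lambda$-modulation preserving; hence by Theorem \ref{t3.6} it carries a unique measurable range operator on $J$, which I will call $R^{\sharp}$. The main step is to prove $R^{\sharp}(x)=R(x)^{*}$ for a.e.\ $x\in\Pi$. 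For $\phi,\psi\in W$ and $g\in L^{\infty}(\Pi)$, multiplicative invariance of $\tilde Z(W)$ (Proposition \ref{p1.2}) yields $\phi_{g}\in W$ with $\tilde Z\phi_{g}=g\,\tilde Z\phi$; expanding $\langle U\phi_{g},\psi\rangle=\langle \phi_{g},U^{*}\psi\rangle$ over $\Pi$ via Theorem \ref{t3.6} gives
\[
\int_{\Pi} g(x)\langle R(x)\tilde Z\phi(x),\tilde Z\psi(x)\rangle\,dx=\int_{\Pi} g(x)\langle \tilde Z\phi(x),R^{\sharp}(x)\tilde Z\psi(x)\rangle\,dx .
\]
As $g$ ranges over $L^{\infty}(\Pi)$ (equivalently, over the determining set $\mathcal D$), the integrands coincide a.e.; letting $\phi,\psi$ run through the countable family $\{\phi_{n}\}$ of Lemma \ref{t2.3} and taking the union of the exceptional null sets, we conclude that for a.e.\ $x$ one has $\langle R(x)u,v\rangle=\langle u,R^{\sharp}(x)v\rangle$ for all $u,v$ in the Parseval frame $\{\tilde Z\phi_{n}(x)\}$ of $J(x)$ (here Proposition \ref{fr} enters), hence for all $u,v\in J(x)$ by density and continuity. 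Thus $R^{\sharp}(x)=R(x)^{*}$ a.e.

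Next I would record the (routine) composition rule: if $U_{1},U_{2}$ are $\Lambda$-modulation preserving with range operators $R_{1},R_{2}$, then $U_{1}U_{2}$ has range operator $x\mapsto R_{1}(x)R_{2}(x)$, since $\tilde Z(U_{1}U_{2}\phi)(x)=R_{1}(x)\tilde Z(U_{2}\phi)(x)=R_{1}(x)R_{2}(x)\tilde Z\phi(x)$ and Theorem \ref{t3.6} applies; likewise $\mathrm{id}_{W}$ has range operator $x\mapsto I_{J(x)}$.

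With these in hand the statements follow. For (2): by the uniqueness of the range operator in Theorem \ref{t3.6}, $U=U^{*}$ if and only if $R$ and $R^{\sharp}=R(\cdot)^{*}$ agree a.e., i.e.\ $U$ is self adjoint iff $R(x)=R(x)^{*}$ for a.e.\ $x\in\Pi$. For (1): $U$ is an isometry iff (by polarization) $U^{*}U=\mathrm{id}_{W}$; by the composition rule and the adjoint formula the range operator of $U^{*}U$ is $x\mapsto R(x)^{*}R(x)$, so uniqueness gives $U^{*}U=\mathrm{id}_{W}$ iff $R(x)^{*}R(x)=I_{J(x)}$ a.e., that is, iff $R(x)$ is an isometry on $J(x)$ for a.e.\ $x\in\Pi$.

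The only genuine obstacle is the adjoint identity $R^{\sharp}(x)=R(x)^{*}$: it requires the localization through multiplicative invariance and the simultaneous choice of a single null set over a countable generating family, precisely as in \cite[Propositions 3.4, 3.5]{KRsh}. Everything else is bookkeeping with Theorem \ref{t3.6}; in particular the compactness hypothesis on $U$ is not actually used in the argument above (it is kept for consistency with the cited results).
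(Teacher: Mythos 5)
Your proof is correct and follows exactly the route that the paper defers to (the analogue of \cite[Propositions 3.4, 3.5]{KRsh}): show that $U^{*}$ is again $\Lambda$-modulation preserving with range operator $R(\cdot)^{*}$ — via the $L^{\infty}(\Pi)$-localization argument and a single null set for the countable generating family — and then invoke the uniqueness of the correspondence in Theorem \ref{t3.6} together with the composition rule. Your remark that the compactness hypothesis is never actually used is also accurate; it plays no role in either direction of (1) or (2).
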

\begin{example}
Define $U  :   L^2(\Bbb{R}) \longrightarrow  L^2(\Bbb{R}) $ by $Uf(x) = f(x) + e^{2\pi i  x}f(x)$. Clearly $U$ is a modulation preserving operator. By Theorem \ref{t3.3} there exists a range operator $R$ such that for every $\phi \in L^2(\Bbb{R})$
\begin{equation*}
R(\omega) (\tilde{Z}\phi (\omega)) = (\tilde{Z} U) \phi (\omega) = \tilde{Z} \phi (\omega) + e^{2\pi i \omega} \tilde{Z}\phi (\omega) = (1+e^{2\pi i \omega})  \tilde{Z} \phi (\omega) .
\end{equation*}
Note that $R(\omega)$ is a multiplication operator (multiplication by $1+e^{2\pi i \omega}$) which is not compact. Notice that also $U$ is not compact. Moreover, $U^* f (x) = f(x) +e^{-2\pi i  x}f(x)$ and $R(\omega)^* (\tilde{Z} \phi (\omega)) =(1+e^{-2\pi i \omega})  \tilde{Z} \phi (\omega) $, where $f , \phi \in L^2(\Bbb{R})$.
\end{example}

\bibliographystyle{amsplain}

\end{document}